\let\ykwhat\widehat
\def\RR{{\mathbb R}}
\def\QQ{{\mathbb Q}}
\def\NN{{\mathbb R}}
\let\epsilon\varepsilon
\def\({\left(}
\def\){\right)}  
\newcommand{\abs}[1]{\left|{#1}\right|}
\newcommand{\prob}[1]{\ensuremath{\mathbf{P}\left(\,#1\,\right)}}
\def\llfloor{\left\lfloor}
\def\rrfloor{\right\rfloor}
\def\llceil{\left\lceil}
\def\rrceil{\right\rceil}
\newcommand{\etal}{\textsl{et al.}}
\DeclareMathOperator{\homg}{hom}
\begin{document}

\newtheorem{theorem}{Theorem}[section]
\newtheorem{cor}[theorem]{Corollary}
\newtheorem{lemma}[theorem]{Lemma}
\newtheorem{fact}[theorem]{Fact}
\newtheorem{property}[theorem]{Property}
\newtheorem{proposition}[theorem]{Proposition}
\newtheorem{claim}[theorem]{Claim}
\newtheorem{definition}[theorem]{Definition}
\theoremstyle{definition}
\newtheorem{example}[theorem]{Example}
\newtheorem{remark}[theorem]{Remark}
\newcommand\eps{\varepsilon}
\newcommand\la {\lambda}
\newcommand{\E}{\mathbb E}
\newcommand{\Var}{{\rm Var}}
\newcommand{\Prob}{\mathbb{P}}
\newcommand{\N}{{\mathbb N}}
\newcommand{\eqn}[1]{(\ref{#1})}

\def\llfloor{\left\lfloor}
\def\rrfloor{\right\rfloor}
\def\llceil{\left\lceil}
\def\rrceil{\right\rceil}

%Balazs macros
\newcommand{\rbabs}[1]{\left|{#1}\right|}
\def \rb_ind{\mathds{1}}
\def \toinp {\buildrel {\text{d}}\over{\longrightarrow}}
\def \toinq {\buildrel {{\rm d}}\over{\longrightarrow}}
\def \toint {\buildrel {{\rm t}}\over{\longrightarrow}}
\def \toinsq {\buildrel {\square}\over{\longrightarrow}}

\newcommand{\rbprob}[1]{\ensuremath{\mathbf{P}\left(\,#1\,\right)}}
\newcommand{\rbexpect}[1]{\ensuremath{\mathbf{E}\left(\,#1\,\right)}}
\newcommand{\rbvar}[1]{\ensuremath{\mathbf{D}^2\left(#1\right)}}
\newcommand{\rbcondprob}[2]{\ensuremath{\mathbf{P}\left(\,#1\,\big|\,#2\,\right)}}
\newcommand{\rbcondexpect}[2]{\ensuremath{\mathbf{E}\left(\,#1\,\big|\,#2\,\right)}}

\title{Limits of permutation sequences}

\author[C. Hoppen]{Carlos Hoppen}
\address{Instituto de Matem\'atica, UFRGS -- Avenida Bento Gon\c{c}alves, 9500, 91509-900 Porto Alegre, RS, Brazil }
\email{\tt choppen@ime.usp.br}
\author[Y. Kohayakawa]{Yoshiharu Kohayakawa}
\address{Instituto de Matem\'atica e Estat\'\i stica, USP -- Rua do Mat\~ao
  1010, 05508--090 S\~ao Paulo, SP, Brazil} \email{\tt yoshi@ime.usp.br}
\author[C. G. Moreira]{Carlos Gustavo Moreira}
\address{IMPA -- Estrada Dona Castorina 110, 22460--320 Rio de
  Janeiro, RJ, Brazil} \email{\tt gugu@impa.br}
\author[B. R\'{a}th]{Bal\'{a}zs R\'{a}th}  
\address{Department of Mathematics, University of British Columbia -- 1984 Mathematics Road, V6T1Z2
Vancouver, BC, Canada
} \email{\tt rathb@math.ubc.ca}  
\author[R. M. Sampaio]{Rudini Menezes Sampaio}
\address{Departamento de Computa\c c\~ao, Centro de Ci\^encias, UFC --
  Campus do Pici, Bloco 910, 60451--760 Fortaleza, CE, Brazil} \email{\tt rudini@lia.ufc.br}

\thanks{The statements of some of the results of this paper appeared in the Proc. of the $21^{\textrm{st}}$ ACM-SIAM Symposium on Discrete Mathematics, SODA~2010.}

\thanks{The first author acknowledges the support by FAPERGS (Proc.~11/1436-1, 10/0388-2), FAPESP  (Proc.~2007/56496-3), and CNPq  (Proc.~484154/2010-9).  The second author was partially supported by  CNPq (Proc.~308509/2007-2, 484154/2010-9). The third author thanks CNPq for its support. The fourth author was partially supported by the OTKA (Hungarian National Research Fund) grant K 60708. The fifth author was partially supported by Funcap (Proc.~07.013.00/09) and CNPq  (Proc.~484154/2010-9).}

\thanks{The authors are grateful to NUMEC/USP, N\'{u}cleo de Modelagem Estoc\'{a}stica e
Complexidade of the University of S\~{a}o Paulo, and Project MaCLinC/USP, for supporting this research.}

%\date{\today, \currenttime}

\begin{abstract}
A permutation sequence $(\sigma_n)_{n \in \mathbb{N}}$
is said to be convergent if, for every fixed permutation $\tau$, the density of occurrences of $\tau$ in the elements of the sequence converges. 
We prove that such a convergent sequence has a natural limit object, 
namely a Lebesgue measurable function $Z:[0,1]^2 \to [0,1]$ with the additional
 properties that, for every fixed $x \in [0,1]$, the restriction $Z(x,\cdot)$ is 
a cumulative distribution function and, for every $y \in [0,1]$, the restriction
 $Z(\cdot,y)$ satisfies a ``mass'' condition. This limit process is well-behaved: 
every function in the class of limit objects is a limit of some permutation sequence,
 and two of these functions are limits of the same sequence if and only if they are equal
 almost everywhere. An ingredient in the proofs is a new model of random permutations,
 which generalizes previous models and might be interesting for its own sake. 
\end{abstract}

\maketitle
\thispagestyle{empty}

\section{Introduction}

As usual, a \emph{permutation} of a finite set $X$ is a bijective function of $X$ into itself. We shall focus on permutations $\sigma$ on the set $X=\{1,\ldots,n\}=[n]$, where $n$ is a positive integer, called the \emph{length} of $\sigma$, and is denoted by $|\sigma|$. In this work, a permutation $\sigma$ on $[n]$ is represented by $\sigma=(\sigma(1),\ldots,\sigma(n))$, and the set of all permutations on $[n]$ is denoted by $S_n$. We denote by $\mathcal{S}=\bigcup_{i=1}^\infty S_n$  the set of all finite permutations.
A \emph{graph} $G=(V,E)$ is given by its \emph{vertex set} $V$ and its \emph{edge set} $E \subseteq \big\{\{u,v\} \subset V ~:~ u \neq v\big\}$.

The main goal of this paper is to introduce a notion of convergence of a permutation sequence $(\sigma_n)_{n \in \mathbb{N}}$ and to 
identify a natural limit object for such a convergent 
sequence whose associated sequence of lengths $(|\sigma_n|)_{n \in \mathbb{N}}$ tends 
to infinity. Lov\'{a}sz and Szegedy~\cite{lovasz06} were concerned with these questions 
in the case of graph sequences $(G_n)_{n \in \mathbb{N}}$. This has been further investigated by Borgs \etal~in~\cite{borgs06b} and~\cite{borgs06c}, where, among other things, limits of graph sequences were used to characterize the testability of graph parameters. The convergence of sequences of combinatorial objects has also been addressed in other structures.
 For instance, graphs with degrees bounded by a constant have been addressed in the recent works of
 Benjamini and Schramm~\cite{benjamini_schramm} and of Elek~(\cite{elek1}, \cite{elek}).
 Elek and Szegedy~\cite{elek_szegedy} studied this problem for hypergraphs.
 See Lov\'{a}sz~\cite{lovasz} for a comprehensive survey of this area. 

Currently, the main application of our results in this paper is in property testing of permutations. Roughly speaking, the objective of testing is to decide whether a combinatorial structure satisfies some property, or to estimate the value of some numerical function associated with this combinatorial structure, by considering only a randomly chosen substructure of sufficiently large, but constant size. These problems are called \emph{property testing} and \emph{parameter testing}, respectively; a property or parameter is said to be \emph{testable} if it can be estimated accurately in this way. The algorithmic appeal of testability is evident, as, conditional on sampling, this leads to reliable constant-time randomized estimators for the said properties or parameters.
 In~\cite{rudini08d} four of the present authors address these questions through the prism of subpermutations. Among their main results are a permutation result in the direction of Alon and Shapira's~\cite{alon_shapira3} work on the testability of hereditary graph properties, and a permutation counterpart of the characterization of testable parameters by Borgs \etal~\cite{borgsstoc}.

Given the similarity of our results with the ones obtained in~\cite{lovasz06}, we briefly describe that work. 
Central in the arguments is the notion of a \emph{homomorphism} of a
 graph $F$ into a graph $G$, a function $\phi:V(F) \rightarrow V(G)$ that 
maps the vertex set $V(F)$ of $F$ into the vertex set $V(G)$ of $G$ with the property that, 
for every edge $\{u,v\}$ in $F$, the pair $\{\phi(u),\phi(v)\}$ is an edge in $G$.
 The number of homomorphisms of $F$ into $G$ is denoted by $\homg(F,G)$, while the 
\emph{homomorphism density} of $F$ into $G$ is given by the probability that a uniformly chosen $\phi$ is a homomorphism:
\begin{equation}
t(F,G)=\frac{\homg(F,G)}{|V(G)|^{|V(F)|}}.
\end{equation}

It is natural to measure the similarity between two graphs $G$ and $G'$ by comparing the homomorphism density of different graphs $F$ into them. This suggests defining a graph sequence $(G_n)_{n \in \mathbb{N}}$ as being \emph{convergent} if,
 for every (simple) graph $F$, the sequence of real numbers $(t(F,G_n))_{n \in \mathbb{N}}$ converges. 
Lov\'{a}sz and Szegedy identify a class of natural limit objects for such convergent sequences, which they call
 \emph{graphons}, in the form of symmetric Lebesgue measurable functions $W:[0,1]^2 \rightarrow [0,1]$. 
One might heuristically imagine
the adjacency matrix of a simple graph as a black-and-white television screen (a white pixel at position $(i,j)$ represents an edge between vertices $i$ and $j$), a graph
sequence as a sequence of TV sets with higher and higher resolution, and the limiting graphon $W$ as the ``perfect TV'' where each point $(x,y) \in [0,1]^2$ is a ``pixel of infinitesimal
size'' and the color of each infinitesimal pixel can be any shade of grey between black and white.

An important feature of this limit object is that it may be used to generate random graphs:
 given a graphon $W:[0,1]^2 \rightarrow [0,1]$ and a positive integer $n$, a \emph{$W$-random}
 graph $G(n,W)$ with vertex set $[n]$ is generated as follows. First, $n$ real numbers $X_1, \ldots, X_n$ are generated
 independently according to the uniform probability distribution on
 the interval $[0,1]$. Then, for every pair of distinct vertices $i$
 and $j$ in $[n]$, the pair $\{i,j\}$ is added to the edge set of the
 graph independently with probability $W(X_i,X_j)$. Heuristically, the
 adjacency matrix of $G(n,W)$ looks like the graphon $W$ if $1 \ll n$:
 we approximate the perfect TV screen by choosing the coordinates of infinitesimal pixels of $W$ at random and declaring a pixel of $G(n,W)$ white with a probability proportional to the greyness of the corresponding 
infinitesimal pixel of $W$. It is important to point out that this model of random graphs generalizes the random graph model $G(n,H)$ (see Lov\'{a}sz and S\'{o}s~\cite{lovasz_sos08}), further generalizing the classical model $G_{n,p}$ due to Erd\H{o}s-R\'{e}nyi~\cite{erdos_renyi} and Gilbert~\cite{gilbert}. 

With this, Lov\'{a}sz and Szegedy define the \emph{homomorphism density} of a $k$-vertex graph $F$ into a graphon $W$ as the probability $t(F,W)$ that $F$ is a subgraph of the $W$-random graph $G(k,W)$, which can be calculated as follows given the above definition of $G(k,W)$:
\begin{equation}\label{explicit_graphon_hom_dens_formula}
  t(F,W)= \int_0^1 \dots \int_0^1 \prod_{ \{i,j\} \in E(F)} W(x_i,x_j) \, {d} x_1 \dots {d} x_k. 
\end{equation}

 They use this to prove that, with each convergent graph sequence $(G_n)_{n \in \mathbb{N}}$, one may
 associate a graphon $W$ such that, for every fixed $F$, 
$$\lim_{n \to \infty} t(F,G_n)=t(F,W).$$
The graphon $W$ is said to be a \emph{limit} to $(G_n)_{n \in \mathbb{N}}$.
 Conversely, Lov\'{a}sz and Szegedy show that, for any fixed graphon $W$, the randomized sequence 
$(G(n,W))_{n \in \mathbb{N}}$ converges to $W$ with probability one. Hence, given a graphon $W$, 
there exists a graph sequence $(G_n(W))_{n \in \mathbb{N}}$ converging to $W$. 

In our paper, a similar path is traced for permutation sequences $(\sigma_n)_{n \in \mathbb{N}}$. 
 The r\^{o}le of the homomorphism density $t(F,G_n)$ of a fixed graph $F$ into $G_n$ is played here by
 the subpermutation density $t(\tau,\sigma_n)$ of a fixed permutation $\tau$ into $\sigma_n$, which we now define. 
By $[n]^m_<$ we mean the set of $m$-tuples in $[n]$ whose elements are in strictly increasing order.

\begin{definition}[Subpermutation density]\label{def.dens.perm1}
For positive integers $k, n \in \N$, let $\tau \in S_k$ and $\pi \in S_n$. 
The \emph{number of occurrences} $\Lambda(\tau,\pi)$  of the permutation $\tau$ in $\pi$ is the number of $k$-tuples
 $(x_1,x_2,\ldots,x_k) \in [n]^k_<$ such that
 $\pi(x_i) < \pi(x_j)$ if and only if $\tau(i)<\tau(j)$. 
The \emph{density} of the permutation $\tau$ as a \emph{subpermutation} of $\pi$ is given by 
\begin{equation}\label{def_subperm_density_formula_cases}
t(\tau,\pi)=
\begin{cases}
\binom{n}{k}^{-1}\Lambda(\tau,\sigma)  & \textrm{if } k\leq n\\
\;0  & \textrm{if  } k> n.
\end{cases}
\end{equation}
\end{definition}

As an illustration, the permutation $\tau=(3,1,4,2)$ occurs in $\pi=(5,6,2,4,7,1,3)$,
since $\pi$ maps the index set $(1,3,5,7)$ onto $(5,2,7,3)$, which appears in the relative order given by~$\tau$. 
This concept may be used to define a convergent permutation sequence in a natural way.
\begin{definition}[Convergence of a permutation sequence]\label{def_conv}
A permutation sequence $(\sigma_n)_{n \in \mathbb{N}}$ is \emph{convergent} if, for 
every fixed permutation $\tau$, the sequence of real numbers $(t(\tau,\sigma_n))_{n \in \mathbb{N}}$ converges.
\end{definition}
The interesting case occurs when the sequence of lengths $(|\sigma_n|)_{n \in \mathbb{N}}$ tends to infinity, since, as we shall see, every convergent permutation sequence $(\sigma_n)_{n \in \mathbb{N}}$ is otherwise eventually constant. We prove that, when $|\sigma_n |\rightarrow \infty$, any convergent permutation sequence has a natural limit object, called a \emph{limit permutation}  (all asymptotics in this paper are with respect to $n \rightarrow \infty$). This limit object consists of a family of \emph{cumulative distribution functions}, or \emph{cdf} for short. We say that a function $F:[0,1] \to [0,1]$ is a cdf if 
\begin{equation}\label{def_cdf}
F \text{ is a non-decreasing and right-continuous function with } F(0) \geq 0 \text{ and } F(1)=1.
\end{equation}
Note that $F$ is a cdf if and only if there is a $[0,1]$-valued random variable $Y$ 
such that, for every $y \in [0,1]$, we have $F(y)=\rbprob{Y\leq y}$.

\begin{definition}[Limit permutation]\label{def.perm.lim}
A \emph{limit permutation} is a Lebesgue measurable function $Z:[0,1]^2 \to[0,1]$ satisfying the following conditions:
\begin{itemize}
\item[(a)] for every $x\in[0,1]$, the function $Z(x,\cdot)$ is a cdf, i.e., \eqref{def_cdf} holds;

\item[(b)] \label{reg_cond_Z_Y_uniform}
 for every $y\in[0,1]$, the function $Z(\cdot,y)$ satisfies
$\int_0^1 Z(x,y)\,dx\ =\ y.$
\end{itemize}
We denote the set of limit permutations by $\mathcal{Z}$.
\end{definition}

In fact, limit permutations are in one-to-one correspondence with
probability measures on the unit square whose marginals are uniformly
distributed in $[0,1]$, as discussed below. The heuristic picture can
again be imagined using TV screens: a permutation $n \in S_n$ might be
represented by the square matrix $A_\sigma=\left( \delta_{\sigma(i),j}
\right)_{i,j=1}^n$, where $\delta_{i,j}$ denotes Dirac's delta
function. Again, if $A(i,j)=1$, we say that the corresponding pixel is
white. A TV screen obtained from a permutation has the property that
the brightness of each row and each column is the same, since each row
and column has a unique white pixel in it.  Now imagine a convergent
sequence of permutations as a sequence of TV screens showing the same
image with higher and higher resolution, and the limit permutation as
the ``perfect TV''.  The brightness distribution of the limit object
can be represented by a probability measure $\mu$ on $[0,1]^2$ and
inherits the property that the total brightness of each ``row'' and
each ``column'' is the same, i.e., the marginal distributions of $\mu$
are uniform on $[0,1]$. Consider the $[0,1]^2$-valued random
variable!$(X,Y)$ with distribution $\mu$.  Denote by
$Z\colon[0,1]^2\to[0,1]$ the \emph{regular conditional distribution
  function} of~$Y$ given~$X$ (for a formal definition of this concept,
see Shiryaev~\cite{shity} and Lemma~\ref{regular} below;
heuristically, we may think of~$Z(x,\cdot)$ as the cdf of $Y$ under
the condition $X=x$). The function~$Z$ will satisfy
Definition~\ref{def.perm.lim}(b) because~$X$ and~$Y$ are uniformly
distributed on~$[0,1]$.  The fact that we may think of limit
permutations as regular conditional distribution functions~$Z$, or as
probability distributions~$\mu$, or as random variables~$(X,Y)$ as
above will be useful in what follows.

As with graphs, limit permutations may be used to define a model of random permutations.
\begin{definition}[$Z$-random permutation]\label{intro_def_of_Z_random_perm_page}
Given  a limit permutation $Z$ we generate the \emph{$Z$-random permutation} $\sigma(n,Z)$ using the following. A sequence of $n$ real numbers $X_1,\dots,X_n$ is generated independently and uniformly on $[0,1]$. Conditional on~$(X_1,\dots,X_n)$, we generate $n$ real numbers $Y_1,\ldots,Y_n$, with each $Y_i$ generated independently according to the cdf $Z(X_i,\cdot)$. Let $(X_1^*,\dots X_k^*)$ and $(Y_1^*,\dots Y_k^*)$ denote the values $X_1, \dots, X_k$ and $Y_1, \dots, Y_k$, respectively, rearranged in increasing order. The $Z$-random permutation $\sigma=\sigma(n,Z)$ is given by $\sigma(i)=j$ if and only if
$X^*_i=X_\ell$ and $Y^*_j=Y_\ell$ for some $\ell \in [n]$.
\end{definition}
In other words, the $Z$-random permutation $\sigma=\sigma(n,Z)$ is given by the relative order of  the vertical coordinates of the points $(X_1,Y_1),\dots,(X_n,Y_n)$ with respect to their horizontal coordinates (we shall see later that, with probability one, $X_i \neq X_j$ and  $Y_i\not= Y_j$ if $i \neq j \in [n]$).

% For example, if $n=3$ and 
%the generation of the $Y_i$ yields $X_1<X_2<X_3$ and $Y_2 < Y_1 <Y_3$, then $\sigma(3,Z)=(2,1,3)$.

This new model of random permutation generalizes the classical random permutation model, 
in which a permutation is selected uniformly at random from all permutations on $[n]$. Indeed, a classical random permutation may be obtained as a $Z$-random permutation for 
the \emph{uniform limit permutation} \label{def_of_Z_u} $Z_u$, where $Z_u(x,y)=y$ for all $(x,y)\in[0,1]^2$. The 
distribution of the corresponding $(X,Y)$  is uniform on $[0,1]^2$, i.e., $X$ and $Y$ are \emph{independent} and uniform
on $[0,1]$.

Again inspired by the graph case, given a limit permutation $Z$, we may define the subpermutation 
density $t(\tau,Z)$ of a permutation $\tau$ on $[k]$ in $Z$ as the probability that the $Z$-random 
permutation $\sigma(k,Z)$ is equal to $\tau$. 
\begin{definition}\label{def:sigma_n_to_Z}
Let  $(\sigma_n)_{n \in \mathbb{N}}$ be a sequence of permutations such that $ |\sigma_n| \to \infty$. 
Let $Z \in \mathcal{Z}$. We say that $(\sigma_n)_{n \in \mathbb{N}}$
converges to $Z$, or briefly write $\sigma_n \to Z$, if 
\begin{equation}\label{intro_conv_t}
 \forall \,  \tau \in \mathcal{S} \, : \, \lim_{n\to\infty}t(\tau,\sigma_n) = t(\tau,Z).
\end{equation}
\end{definition}

Note that the assumption $ |\sigma_n| \to \infty$
 is quite natural: we prove in Claim \ref{claim_eventually_constant} that if  $(\sigma_n)_{n \in \mathbb{N}}$
converges and $(\rbabs{\sigma_n})_{n \in \mathbb{N}}$ has a bounded subsequence then the sequence $(\sigma_n)_{n \in \mathbb{N}}$
is eventually constant.

\begin{theorem}[Main result]\label{teorema_principal}

$ $

\begin{itemize}

\item[(i)] \label{convergent_sequence_has_a_limit}
Given a convergent permutation sequence $(\sigma_n)_{n \in \mathbb{N}}$ for which $ |\sigma_n| \to \infty$, 
there exists a limit permutation $Z$ such that $\sigma_n \to Z$ holds.

\item[(ii)] \label{limit_object_has_a_convergent_seq}
Conversely, every $Z \in \mathcal{Z}$ is a limit of a convergent permutation sequence, i.e., there is a sequence
$(\sigma_n)_{n \in \mathbb{N}}$ such that $\sigma_n \to Z$ holds.
\end{itemize}
\end{theorem}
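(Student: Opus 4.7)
The plan is to prove (ii) first by exhibiting an explicit limiting sequence via the random permutation model, then to prove (i) by a compactness argument exploiting the correspondence between limit permutations and probability measures on $[0,1]^2$ with uniform marginals.

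For part (ii), I would generate, for each $n$, the $Z$-random permutation $\sigma(n,Z)$ of Definition~\ref{intro_def_of_Z_random_perm_page} and show that $t(\tau,\sigma(n,Z)) \to t(\tau,Z)$ almost surely for every $\tau$. By conditioning on a uniformly chosen $k$-subset of $[n]$, the pattern induced by that subsample of the i.i.d.\ pairs $(X_i,Y_i)$ is distributed exactly as $\sigma(k,Z)$, so that $\mathbb{E}[t(\tau,\sigma(n,Z))] = t(\tau,Z)$. Regarding $t(\tau,\sigma(n,Z))$ as a $U$-statistic in the i.i.d.\ pairs and applying a bounded-differences inequality (each pair affects the density by at most $O(k/n)$) gives Gaussian concentration around the mean. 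Since $\mathcal{S}$ is countable, a union bound together with Borel--Cantelli produces almost-sure joint convergence over all $\tau$, and any realization in the resulting probability-one event yields the required deterministic sequence.

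For part (i), I would associate to each $\sigma_n$ the measure $\mu_n$ on $[0,1]^2$ that places uniform mass $1/n$ on the cell $[(i-1)/n,i/n] \times [(\sigma_n(i)-1)/n,\sigma_n(i)/n]$ for each $i \in [n]$; this choice guarantees that both marginals of $\mu_n$ are Lebesgue on $[0,1]$. By Prokhorov's theorem (the unit square is compact), the sequence $(\mu_n)$ admits a weakly convergent subsequence $\mu_{n_j} \to \mu$; passing to the weak limit with bounded continuous test functions depending on a single coordinate shows that $\mu$ still has uniform marginals. Invoking Lemma~\ref{regular}, the regular conditional distribution of $Y$ given $X$ under $\mu$ yields a function $Z \in \mathcal{Z}$. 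I would then verify that, for each fixed $\tau \in S_k$, the density $t(\tau,\sigma_n)$ differs by $O(k^2/n)$ from the probability $\widetilde t(\tau,\mu_n)$ that $k$ i.i.d.\ samples from $\mu_n$ induce the pattern $\tau$, the error accounting for sampling with versus without replacement and for coordinate ties on the grid. Because $\mu$ has continuous marginals, the indicator function that defines $\widetilde t(\tau,\cdot)$ is continuous $\mu^{\otimes k}$-almost everywhere, so the Portmanteau theorem yields $\widetilde t(\tau,\mu_{n_j}) \to \widetilde t(\tau,\mu) = t(\tau,Z)$. The hypothesis that the original sequence converges then forces the full sequence $(t(\tau,\sigma_n))$ to converge to $t(\tau,Z)$, independently of the chosen subsequence.

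The principal technical obstacle will be this last step: the ordering functional underlying $\widetilde t(\tau,\cdot)$ is only almost-everywhere continuous, so I would need to control the boundary contributions arising from coordinate ties, and it is precisely the uniformity of the marginals of $\mu$ that keeps those contributions null. A secondary but important point is checking that $Z \in \mathcal{Z}$ in the first place, namely condition~(b) of Definition~\ref{def.perm.lim}; this follows from the uniform marginals of $\mu$ via Fubini applied to the regular conditional decomposition of $\mu$ along the first coordinate.
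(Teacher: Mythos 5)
Your proposal is correct and follows essentially the same route as the paper: part (i) is the paper's argument (Prokhorov compactness for the step measures $\mu_n$ with uniform marginals, Lemma~\ref{regular} to extract $Z$, the $O(k^2/n)$ replacement/tie correction of Lemma~\ref{permutation_and_associated_limit_permutation_subperm_densities_are_close}, and the Portmanteau theorem applied to the a.e.-continuous ordering indicator), and part (ii) is the paper's $Z$-random permutation construction with the same identity $\rbexpect{t(\tau,\sigma(n,Z))}=t(\tau,Z)$. The only variation is that for almost-sure convergence in (ii) you use McDiarmid's bounded-differences inequality (with Lipschitz constant $k/n$ per coordinate) plus Borel--Cantelli where the paper cites the strong law of large numbers for U-statistics; both work, and the paper itself notes a Borel--Cantelli alternative.
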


Theorem \ref{teorema_principal} naturally raises the question of characterizing the limit permutations that are limits to the same given convergent permutation sequence $(\sigma_n)_{n \in \mathbb{N}}$. It is clear that this limit is not unique in a strict sense, because, if $\sigma_n \to Z$ and $A$ is a measurable subset of $[0,1]$ with measure zero, then any limit permutation obtained through the replacement of the cdf $Z(x,\cdot)$ by a cdf $Z^{\ast}(x,\cdot)$ for every $x \in A$ is also a limit of $(\sigma_n)_{n \in \mathbb{N}}$, as the value 
of the probabilities corresponding to $t(\tau,Z)$ and $t(\tau,Z^{\ast})$ will be identical.
This question has also been raised in the case of graph limits in~\cite{borgs06b}, where uniqueness was captured by an equivalence relation induced by a pseudometric $d_{\square}$ between graphons. Two graphons $W$ and $W^{\ast}$ were proved to
 be limits of the same graph sequence if and only if $d_{\square}(W,W^{\ast})=0$. 
The case of permutations is simpler.
\begin{theorem}\label{res_princ1} Let $Z_1, Z_2 \in \mathcal{Z}$.
Let $(\sigma_n)_{n \in \mathbb{N}}$ be a convergent permutation sequence.
Then  $\sigma_n \to Z_1$ and $\sigma_n \to Z_2$  if and only if the set $\{x : Z_1(x,\cdot) \not \equiv Z_2(x,\cdot)\}$
has Lebesgue measure zero.
\end{theorem}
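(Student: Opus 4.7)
The proof splits into the easy ``if'' direction and the substantive ``only if'' direction. For the if direction, assume $A := \{x \in [0,1] : Z_1(x, \cdot) \not\equiv Z_2(x, \cdot)\}$ has Lebesgue measure zero. By Definition \ref{intro_def_of_Z_random_perm_page} the probability $t(\tau, Z) = \rbprob{\sigma(k,Z) = \tau}$ is a $k$-fold integral over $(x_1, \dots, x_k) \in [0,1]^k$ of an integrand that depends on $Z$ only through the cdfs $Z(x_i, \cdot)$; the integrands for $Z_1$ and $Z_2$ can differ only on $\bigcup_{i=1}^k \{x_i \in A\}$, a set of $k$-dimensional Lebesgue measure zero, so $t(\tau, Z_1) = t(\tau, Z_2)$ for every $\tau \in \mathcal{S}$. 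Hence $\sigma_n \to Z_1$ and $\sigma_n \to Z_2$ are equivalent assertions.

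For the only if direction, assume $\sigma_n \to Z_1$ and $\sigma_n \to Z_2$, so that $t(\tau, Z_1) = t(\tau, Z_2)$ for every $\tau \in \mathcal{S}$. The plan is to pass to the probability-measure viewpoint: to each $Z \in \mathcal{Z}$ I associate, as in the discussion following Definition \ref{def.perm.lim}, the probability measure $\mu_Z$ on $[0,1]^2$ with uniform marginals such that $Z(x, \cdot)$ is the regular conditional cdf of the second coordinate given that the first equals $x$. By the uniqueness half of the regular conditional distribution theorem (Lemma \ref{regular}), $\mu_{Z_1} = \mu_{Z_2}$ if and only if $\{x : Z_1(x, \cdot) \not\equiv Z_2(x, \cdot)\}$ is Lebesgue-null; so it suffices to prove that the subpermutation densities $(t(\tau, Z))_{\tau \in \mathcal{S}}$ determine the measure $\mu_Z$.

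For this, for each $\pi \in S_k$ let $M_\pi := \frac{1}{k} \sum_{i=1}^k \delta_{(i/k,\, \pi(i)/k)}$ be the scatter probability measure on $[0,1]^2$, and define the deterministic measure
$$m_k(Z) := \rbexpect{M_{\sigma(k, Z)}} = \sum_{\tau \in S_k} t(\tau, Z)\, M_\tau,$$
which depends on $Z$ only through $(t(\tau, Z))_{\tau \in S_k}$. Thus $m_k(Z_1) = m_k(Z_2)$ for every $k$, and it is enough to show $m_k(Z) \to \mu_Z$ weakly as $k \to \infty$, for any $Z \in \mathcal{Z}$. To see this, I would realize $\sigma(k, Z)$ on a single probability space carrying i.i.d.\ pairs $(X_\ell, Y_\ell)_{\ell \ge 1}$ with law $\mu_Z$: then $M_{\sigma(k, Z)}$ coincides with $\frac{1}{k} \sum_{\ell=1}^k \delta_{(F_k^X(X_\ell),\, F_k^Y(Y_\ell))}$, where $F_k^X, F_k^Y$ are the empirical cdfs of the $X$'s and $Y$'s. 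Since both marginals of $\mu_Z$ are uniform, Glivenko--Cantelli gives $\|F_k^X - \mathrm{id}\|_\infty, \|F_k^Y - \mathrm{id}\|_\infty \to 0$ almost surely, so $M_{\sigma(k, Z)}$ is asymptotically close in L\'evy--Prokhorov metric to the empirical measure $\hat\nu_k := \frac{1}{k} \sum_\ell \delta_{(X_\ell, Y_\ell)}$, which converges weakly to $\mu_Z$ a.s.\ by the law of large numbers. Integrating against continuous bounded test functions and invoking bounded convergence then yields $m_k(Z) \to \mu_Z$ weakly.

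The main obstacle is this last weak-convergence step: one has to quantify the closeness of $M_{\sigma(k, Z)}$ to $\hat\nu_k$ through the Glivenko--Cantelli coupling and carefully handle the interchange of expectation and weak limit. Once this is in place, combining $m_k(Z_1) = m_k(Z_2)$ with $m_k(Z) \to \mu_Z$ for $Z = Z_1, Z_2$ yields $\mu_{Z_1} = \mu_{Z_2}$, and the equivalence from Lemma \ref{regular} closes the argument.
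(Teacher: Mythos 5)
Your proof is correct and, at its core, follows the same strategy as the paper: reduce the theorem to the claim that the subpermutation densities $(t(\tau,Z))_{\tau\in\mathcal S}$ determine the joint law $\mu_Z$ of the associated pair $(X,Y)$, and then translate $\mu_{Z_1}=\mu_{Z_2}$ into almost-everywhere equality of the conditional cdfs via the uniqueness part of Lemma~\ref{regular} (this is exactly the chain of equivalences in \eqref{equivalence_of_identities}). The paper disposes of the theorem in two lines by citing its Lemma~\ref{subperm_dens_uniquely_determines_Z}, whose proof recovers the joint distribution function as $F(x,y)=\lim_k\rbexpect{F_k(x,y)}$ using the quantitative Hoeffding-based concentration bound of Lemma~\ref{limit_perm_and_large_random_perm_are_close_in_rectangular}. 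You rebuild this key step from scratch by a softer route: you pass to the expected empirical measure $m_k(Z)=\sum_\tau t(\tau,Z)M_\tau$, couple $\sigma(k,Z)$ to the i.i.d.\ sample as in Definition~\ref{def:Z_random_perm}, use Glivenko--Cantelli on the uniform marginals to replace normalized ranks by the sample points themselves, and conclude $m_k(Z)\Rightarrow\mu_Z$ from the almost sure weak convergence of empirical measures. The step you flag as the main obstacle is in fact routine: for each fixed continuous $f$ on $[0,1]^2$ (hence bounded and uniformly continuous), the variables $\int f\,\mathrm{d}M_{\sigma(k,Z)}$ are uniformly bounded and converge a.s., so dominated convergence applies test function by test function; no uniformity over $f$ is needed. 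What your version buys is a self-contained, purely qualitative argument; what the paper's version buys is reuse of a quantitative sampling lemma it needs anyway for testability. One cosmetic point: the ``if'' direction read literally requires that $\sigma_n$ converge to at least one of $Z_1,Z_2$; your reading --- that the two convergence statements are equivalent when the exceptional set is null --- is the intended one and matches the paper's informal discussion preceding the theorem.
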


Recall that limit permutations~$Z$ may be viewed as certain
probability distributions~$\mu$ or as certain random variables~$(X,Y)$
(see the discussion just after Definition~\ref{def.perm.lim}).
Theorem~\ref{res_princ1} implies that limit permutations are unique
when viewed as such probability distributions or random variables.

Based on a previous concept by Cooper~\cite{cooper1}, we may introduce
a distance~$d_\square$ between permutations (see \eqref{def.rec})
and, more generally, between limit permutations, which is a
permutation counterpart of the graph pseudometric discussed in the
previous paragraph. In particular, we may characterize our notion of
convergence of permutation sequences in terms of this metric. As
usual, a sequence $(\sigma_n)_{n \in \mathbb{N}}$ is said to be a
\emph{Cauchy sequence} with respect to the metric $d_{\square}$ if,
for every $\eps>0$, there exits $n_0=n_0(\eps)$ such that
$d_{\square}(\sigma_n,\sigma_m)<\eps$ for every $n,m \geq n_0$.

\begin{theorem}\label{teorema_equiv}
A permutation sequence $(\sigma_n)_{n \in \mathbb{N}}$ converges if and only if it is a Cauchy sequence with respect to the metric $d_{\square}$.
\end{theorem}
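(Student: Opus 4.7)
My plan is to prove the two implications separately. The easier direction, Cauchy implies convergent, hinges on a \emph{counting lemma} stating that for every fixed permutation $\tau$ of length $k$, the map $\pi \mapsto t(\tau,\pi)$ is Lipschitz in $d_\square$, with a constant $C_\tau$ depending only on $k$: for any two permutations $\pi_1,\pi_2$,
\[
\bigl|t(\tau,\pi_1) - t(\tau,\pi_2)\bigr| \leq C_\tau \, d_\square(\pi_1,\pi_2).
\]
To prove this lemma I would associate to a permutation $\pi \in S_n$ the empirical measure $\mu_\pi$ on $[0,1]^2$ placing mass $1/n$ at each point $\bigl((i-\tfrac{1}{2})/n, (\pi(i)-\tfrac{1}{2})/n\bigr)$, rewrite $t(\tau,\pi)$ as an integral of indicators of axis-aligned rectangles with respect to $\mu_\pi^{\otimes k}$ (with an $O(1/n)$ error to absorb coincidences in coordinates), and then replace one factor of $\mu_{\pi_1}$ by $\mu_{\pi_2}$ at a time in a telescoping sum, each swap contributing at most $d_\square(\pi_1,\pi_2)$. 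The lemma immediately yields that if $(\sigma_n)$ is Cauchy in $d_\square$ then the real sequence $(t(\tau,\sigma_n))_{n\in\N}$ is Cauchy, hence convergent, for every fixed $\tau$.

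For the converse, convergent implies Cauchy, the natural approach is to use Theorem~\ref{teorema_principal}(i) to obtain a limit permutation $Z$ with $\sigma_n \to Z$, extend $d_\square$ to $\mathcal{Z}$ as the supremum of rectangle discrepancies between the associated probability measures, and then show $d_\square(\sigma_n, Z) \to 0$; the Cauchy property follows from the triangle inequality. To show $d_\square(\sigma_n,Z)\to 0$, I would fix $\eps>0$, choose a large $k$, and work with the $Z$-random permutation $\tau^\ast = \sigma(k,Z)$ of length $k$. Two concentration ingredients are needed: (i) $d_\square(\tau^\ast,Z) < \eps/2$ with probability close to~$1$; and (ii) for a uniformly random $k$-sub-permutation $\tilde\sigma_n$ of $\sigma_n$, one has $d_\square(\tilde\sigma_n,\sigma_n) < \eps/2$ with high probability as $n \to \infty$. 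Since the distribution of $\tau^\ast$ on $S_k$ is exactly $(t(\tau,Z))_{\tau \in S_k}$ and that of $\tilde\sigma_n$ is $(t(\tau,\sigma_n))_{\tau \in S_k}$, the hypothesis $\sigma_n\to Z$ implies these two probability vectors are close in $\ell^1$ for $n$ large, so the two random permutations can be coupled to agree with high probability. Combining these facts produces, for all sufficiently large $n$, a single deterministic $k$-permutation within $d_\square$-distance $\eps/2$ of both $\sigma_n$ and $Z$, and the triangle inequality then gives $d_\square(\sigma_n,Z) < \eps$.

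The main obstacle is the sampling statement (i) above, the permutation analogue of the First Sampling Lemma of Borgs \etal\ for graphons. I would prove it via a Vapnik--Chervonenkis/uniform empirical-process argument: the family of axis-aligned rectangles in $[0,1]^2$ has finite VC-dimension, so the empirical measure $\mu_{\tau^\ast}$ approximates the distribution associated with $Z$ uniformly on all rectangles with high probability once $k \gg \eps^{-2}$. A secondary technical point is to verify that the extension of $d_\square$ from $\mathcal{S}$ to $\mathcal{Z}$ is well-defined (independent of the measure-zero ambiguity of the representative of $Z$ discussed after Theorem~\ref{res_princ1}) and still satisfies the triangle inequality; this is routine but must be checked for the coupling argument to close.
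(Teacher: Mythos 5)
Your proposal is correct, but it reaches the theorem by a genuinely different route in both directions. For ``Cauchy $\Rightarrow$ convergent'' the paper has no counting lemma: it passes to the distribution functions $F_n$ of $Z_{\sigma_n}$, uses completeness of $L_\infty$ together with Prokhorov's theorem \eqref{prokhorov} and Lemma~\ref{convergence_in_distribution_uniform_convergence_of_F} to produce $Z\in\mathcal{Z}$ with $Z_{\sigma_n}\toinsq Z$, and then invokes the equivalence $\toinsq\iff\toint$ of Lemma~\ref{different_modes_of_conv_for_Z_are_equiv}, whose relevant implication is a soft Portmanteau argument on $2k$-fold product measures. Your Lipschitz counting lemma is a quantitative substitute for that implication, and it does hold: conditioned on the other $k-1$ sample points, the pattern event $A_\tau$ is a disjoint union of at most $k$ axis-parallel rectangles, so each telescoping swap costs at most $k\cdot d_{\square}(\pi_1,\pi_2)$, giving $C_\tau\le k^2$, plus the $O(k^2/n)$ coincidence term you mention (which is exactly the content of Lemma~\ref{permutation_and_associated_limit_permutation_subperm_densities_are_close}). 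For ``convergent $\Rightarrow$ Cauchy'' your sampling-plus-coupling argument reassembles ingredients the paper proves anyway --- your items (i) and (ii) are Lemmas~\ref{limit_perm_and_large_random_perm_are_close_in_rectangular} and~\ref{cor.amostra2} --- whereas the paper's official proof again routes everything through Lemma~\ref{different_modes_of_conv_for_Z_are_equiv} and never couples anything; your VC/uniform empirical-process proof of (i) is a valid replacement for the paper's grid-plus-Hoeffding argument and even yields a sharper rate than $k^{-1/4}$. What the paper's approach buys is economy: the theorem falls out of one equivalence lemma already needed for Theorems~\ref{teorema_principal} and~\ref{res_princ1}. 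What yours buys is explicit quantitative control (a Lipschitz constant in $\tau$ and effective sample sizes) and independence from the weak-convergence machinery. One caveat, which your argument shares with the paper's: the reduction to $|\sigma_n|\to\infty$ is justified by Claim~\ref{claim_eventually_constant} only for sequences already known to converge, so the ``Cauchy $\Rightarrow$ convergent'' direction tacitly assumes the lengths tend to infinity (a sequence alternating between the identity in $S_1$ and quasirandom permutations of growing length is $d_{\square}$-Cauchy but its densities $t((1,2),\sigma_n)$ oscillate between $0$ and about $1/2$); your additive $O(k^2/|\sigma_n|)$ error term at least makes this hidden assumption visible.
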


Also in analogy with the work for graphs by Lov\'{a}sz and Szegedy, the theory
 in this paper can be considered in terms of the discrete metric space $(\mathcal{S},d_{\square})$,
 where $\mathcal{S}=\bigcup_{i=1}^\infty S_n$ is the set of all finite permutations and $d_{\square}$ is
 the metric of the previous paragraph. By a standard diagonalization argument, every permutation 
sequence can be shown to have a convergent subsequence (see Lemma~\ref{conv_subseq}). 
As a consequence, the metric space $(\mathcal{S},d_{\square})$ can be enlarged to a compact metric space $(\mathcal{Z}/_{\sim},d_{\square})$ by adding limit permutations, where we identify limit permutations that are equal almost everywhere. By Theorem~\ref{teorema_principal} the subspace of permutations is dense in $\mathcal{Z}/_{\sim}$; moreover, it is discrete, since a sequence cannot converge to a permutation without being eventually constant (Claim \ref{claim_eventually_constant}).
 Finally, Theorem~\ref{teorema_equiv} tells us that, when restricted to permutations, convergence
 in this metric space coincides with the concept of convergence in Definition~\ref{def_conv}.   

We have found two essentially different paths for establishing the main results in this paper. One of them starkly resembles the work of Lov\'{a}sz and Szegedy~\cite{lovasz06} for graph sequences, which relies on Szemer\'{e}di-type regularity arguments. However, several difficulties of technical nature arise, as the limit objects here are more constrained than in the graph case. For a detailed account of this approach, we refer the reader to~\cite{rudini08c}, while most of its permutation regularity ingredients may be found in~\cite{rudini08b}. 

In this paper, we have opted for an alternative approach, with a
%
% which at first might seem daunting for some combinatorialists due to
% its
%
distinctive probabilistic flavor, which has the advantage of being
both more compact and more direct, since several of the technicalities
of the first approach can be avoided.

The remainder of this paper is structured as follows. In Section
\ref{section:prob_prelim} we collect some preliminary results from
probability theory. For instance, we discuss the relation between
limit permutations and probability measures on the unit square, and we
recall some facts about weak convergence of probability measures in
this context. Moreover, we prove some simple facts about the
convergence of permutation sequences. Section
\ref{section:Z_random_perm_and_subperm_dens} is devoted to a
discussion of $Z$-random permutations and of the probabilistic meaning
of subpermutation densities. Section \ref{section:distance} deals with
the rectangular distance on $\mathcal{S}$ and $\mathcal{Z}$, and we
prove that a large $Z$-random permutation is close to $Z$ in the
$d_\square$-distance with high probability. In Section
\ref{section:limits_of_perm_seq} we define three natural, different
notions of convergence on $\mathcal{Z}$, and prove that they are all
equivalent. This is then used to prove
Theorems~\ref{teorema_principal},~\ref{res_princ1}
and~\ref{teorema_equiv}. For completeness, we provide an appendix
containing the proof of an auxiliary probabilistic lemma that turns
out to be important in our proofs, but which may be proven by 
standard measure-theoretic arguments.

\section{Preliminaries}\label{section:prob_prelim}
The present section introduces the main concepts of probability theory that are important in this paper, and gives the proofs of two simple remarks about the convergence of permutation sequences. It is organized as follows. Subsection \ref{subsection:prob_dist_on_unit_square} deals with relevant properties of probability distributions on the unit square. In Subsection \ref{subsection:weak} we recall useful facts about weak convergence of probability measures on compact subspaces of $\RR^d$, while in Subsection \ref{subsection:reg_cond_prob} we relate limit permutations to probability distributions on the unit square using the notion of regular conditional probability distributions. The remarks about the convergence of permutation sequences are addressed in Subsection~\ref{sub_remarks}. 

\subsection{Probability distributions on the unit square}\label{subsection:prob_dist_on_unit_square} If $A \subseteq \Omega$ is an event in a probability space, we  denote by $\rb_ind[A]$ the indicator of the event, i.e., the random variable which takes value $1$ if $A$ occurs and value $0$ if $A$ does not occur.

We shall work with random variables $(X,Y)$ that take values in the
unit square $[0,1]^2$: $X$ is the horizontal and $Y$ is the vertical
coordinate of the random point $(X,Y)$. The joint distribution of
$(X,Y)$ can be represented in many different ways.

We might define a probability measure $\mu$ on $[0,1]^2$ by defining
$\mu(B)=\rbprob{(X,Y) \in B}$ for every Borel set $B \subseteq
[0,1]^2$, but since the $\sigma$-algebra of Borel sets is generated by
the collection of rectangles of the form $B=[0,x]\times [0,y]$, it is
enough to specify $\mu(B)$ for sets of this form to define $\mu$
uniquely. Thus we define the \emph{joint probability distribution
  function} $F$ of $(X,Y)$ by
\[ F(x,y)=\rbprob{ X \leq x,Y \leq y} =\mu \left( [0,x]\times [0,y]
\right) , \quad x,y \in [0,1]. \] We call the distribution of $X$ and
$Y$ the \emph{first} and \emph{second marginal distributions} of
$(X,Y)$, respectively, which satisfy $ \rbprob{X \leq x} = F(x,1)$ and
$\rbprob{Y \leq y} = F(1,y) $.

If $x_1<x_2$ and $y_1<y_2$ then
\begin{equation}\label{measure_of_rectangle_from_F}
 \rbprob{X \in (x_1,x_2], \, Y \in (y_1,y_2]}=F(x_2,y_2)-F(x_1,y_2)-F(x_2,y_1)+F(x_1,y_1). 
\end{equation}

The $[0,1]^2$-valued random variables that correspond to limit
permutations will always have the property that both $X$ and $Y$ are
uniformly distributed on $[0,1]$, which we denote by $X, \, Y \sim
U[0,1]$.  This happens if and only if we have $F(x,1)=x$, $F(1,y)=y$
for any $x,y \in [0,1]$.

We shall make use of the following simple inequality many times:
\begin{multline}\label{uniform_marginals_bound} 
  X, \, Y \sim U[0,1] \; \implies \;\\
  \forall \; x_1,x_2,y_1,y_2 \in [0,1] \, : \;
  \rbabs{F(x_2,y_2)-F(x_1,y_1)} \leq \rbabs{x_2-x_1}+\rbabs{y_2-y_1}.
\end{multline}
A direct consequence of the fact that~$X$ and~$Y$ are both uniform is
that, for any rectangle
$R=[x_1,x_2] \times [y_1,y_2] \subseteq [0,1]^2$, we have
\begin{equation}\label{boundary_zero_measure}
  \rbprob{ (X,Y) \in \partial R} \leq \rbprob{ X=x_1}+\rbprob{ X=x_2}+
  \rbprob{ Y=y_1}+\rbprob{ Y=y_2} =0,  
\end{equation}
where $\partial R$ denotes the boundary of~$R$.  In particular we have
\begin{equation*}
  F(x,y)=\rbprob{ X<x,Y<y}=\rbprob{ X \leq x,Y \leq y}.
\end{equation*}

If $(X_1,Y_1)$ and $(X_2,Y_2)$ are both $[0,1]^2$-valued random variables then we say that they have the same distribution or briefly write $(X_1,Y_1) \sim (X_2,Y_2)$ if their respective probability measures $\mu_1$ and $\mu_2$ agree on the Borel sets of $[0,1]^2$, or, 
equivalently, if their respective joint distribution functions are the same: $F_1(x,y)=F_2(x,y)$ for all $x,y \in [0,1]$.

\subsection{Weak convergence of probability measures}\label{subsection:weak} Now we recall some well-known facts about weak convergence of probability measures (for details, see \cite{billingsley}).

Let $\Omega$ be a complete, separable metric space and let $\mathcal{B}$ denote the $\sigma$-algebra of Borel sets. If $\mu_1,\mu_2,\ldots$ and $\mu$ are probability measures on the measurable space $(\Omega, \mathcal{B})$, then
 we say that the sequence $(\mu_n)_{n=1}^\infty$ converges 
weakly to $\mu$ (which we denote $\mu_n \Rightarrow \mu$) if for all bounded, continuous functions $f: \Omega \to \RR$ we have
\begin{equation}\label{def_weak_conv_of_prob_meas}
  \lim_{n \to \infty} \int_{\Omega} f(\omega) \, \mathrm{d} \mu_n(\omega)= \int_{\Omega} f(\omega) \, \mathrm{d} \mu(\omega). 
\end{equation}
We will make use of the following consequence of Prokhorov's theorem (see Chapter 1, Section 5 of \cite{billingsley}) characterizing compact subsets of the space of probability 
measures on $\Omega$.
\begin{equation}\label{prokhorov}
\text{ If } \Omega \text{ is compact then every sequence } ( \mu_n)_{n=1}^\infty  \text{ has a weakly convergent subsequence}.
\end{equation}

For $\Omega=\RR^d$, if we denote by $(X^1,\dots,X^d)$ the $\RR^d$-valued random variable with distribution $\mu$, and, similarly, we let
$(X^1_n,\dots,X^d_n)$ be the random variable with distribution $\mu_n$, then we say that $(X^1_n,\dots,X^d_n)$ converges in distribution to
$(X^1,\dots,X^d)$ (or briefly write $(X^1_n,\dots,X^d_n) \toinp (X^1,\dots,X^d)$) if $\mu_n \Rightarrow \mu$.

It easily follows from \eqref{def_weak_conv_of_prob_meas} that
\begin{equation}\label{weak_conv_then_marginals_conv}
 (X^1_n,\dots,X^d_n) \toinp (X^1,\dots,X^d) \qquad \implies \qquad \forall \,  i \in [d] \; : \;
X^i_n \toinp X^i.
\end{equation}
In other words, weak convergence of the joint $d$-dimensional
distributions implies weak convergence of the marginal distributions. As a consequence we obtain the following useful fact about
weak convergence of $[0,1]^2$-valued random variables with uniform marginals:
\begin{equation}\label{weak_conv_uniform_marginals}
\forall \, n \in \NN \, : \, X_n,  \, Y_n \sim U[0,1], \textrm{ and } (X_n,Y_n) \toinp (X,Y) \qquad  \implies \qquad
 X,  \, Y \sim U[0,1].
\end{equation}

The converse implication of \eqref{weak_conv_then_marginals_conv} does not hold in general, but it does hold in the special case when  the marginals are independent. 

Let $\Omega^i$ denote a metric space for all $i \in [d]$, and denote by 
$\Omega=\Omega^1 \times \dots \times \Omega^d$ the product space (we may define the distance between points of $\Omega$ to be the $L_\infty$-distance, that is, the maximum of the distance of each pair of coordinates). If $\mu^i_n$, $\mu^i$ are probability measures on $\Omega^i$ then
\begin{equation}\label{product_measure_weak_conv}
\forall \, i \in [d] \; : \; \mu_n^i \Rightarrow \mu^i \qquad \implies \qquad
\mu_n^1 \times \dots \times \mu^d_n \Rightarrow \mu^1 \times \dots \times \mu^d,
\end{equation}
where $\mu^1 \times \dots \times \mu^d$ denotes the product measure of the measures $\mu^1,\dots,\mu^d$ on the product space $\Omega$. 
For the proof of this fact, see \cite[Theorem 2.8(ii)]{billingsley}.

We say that $B \in \mathcal{B}$ is a continuity set of the measure
$\mu$ if $\mu( \partial B)=0$. The following equivalent
characterization of weak convergence is a consequence of the
\emph{Portmanteau theorem} (see \cite[Theorem 2.1]{billingsley}):
\begin{equation}\label{portemanteau}
  \mu_n \Rightarrow \mu \qquad \iff \qquad \mu_n(B) \to \mu(B) \;
  \text{ for all continuity sets } B \text{ of } \mu. 
\end{equation}

\begin{lemma}\label{convergence_in_distribution_uniform_convergence_of_F}
  Let $(\mu_n)_{n=1}^\infty$ and $\mu$ be probability measures
  on $[0,1]^2$.  Denote by $(X_n,Y_n)$ and $(X,Y)$ the corresponding
  $[0,1]^2$-valued random variables and by $F_n(x,y)$ and $F(x,y)$ the
  corresponding joint probability distribution functions. Assume that
  for all $n \in \N$, we have $X_n, \, Y_n \sim U[0,1]$. Then
  \begin{equation*}
    (X_n,Y_n) \toinq (X,Y) \qquad \iff \qquad \| F_n - F \|_\infty
    =\sup_{x,y \in [0,1]} | F_n(x,y) - F(x,y) | \to 0.  
  \end{equation*}
\end{lemma}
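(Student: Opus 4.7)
The plan is to establish the two implications separately, both essentially driven by the fact that every joint cdf with uniform marginals is $1$-Lipschitz in each coordinate, as recorded in~\eqref{uniform_marginals_bound}.

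For the forward direction, assume $(X_n,Y_n)\toinq(X,Y)$. First invoke~\eqref{weak_conv_uniform_marginals} to conclude that $(X,Y)$ also has uniform marginals, so~\eqref{boundary_zero_measure} applied to $\mu$ says that every rectangle $[0,x]\times[0,y]$ is a continuity set of $\mu$. Portmanteau~\eqref{portemanteau} then yields the pointwise convergence $F_n(x,y)\to F(x,y)$ for every $(x,y)\in[0,1]^2$. To upgrade this to uniform convergence, I would exploit that both the $F_n$ and $F$ satisfy the Lipschitz-type bound~\eqref{uniform_marginals_bound}. Given $\eps>0$, set $\delta=\eps/5$ and consider the finite grid $G_\delta=\{(i\delta,j\delta):0\le i,j\le \lceil 1/\delta\rceil\}\cap[0,1]^2$. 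By pointwise convergence at the finitely many grid points, there is an $N$ with $|F_n(p)-F(p)|<\delta$ for every $p\in G_\delta$ and every $n\ge N$. For an arbitrary $(x,y)\in[0,1]^2$, pick a grid point $(x_i,y_j)\in G_\delta$ with $|x-x_i|+|y-y_j|\le 2\delta$; combining~\eqref{uniform_marginals_bound} (applied to both $F_n$ and $F$) with the triangle inequality yields $|F_n(x,y)-F(x,y)|\le 2\delta+\delta+2\delta=\eps$.

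For the converse, suppose $\|F_n-F\|_\infty\to 0$. Then $F_n\to F$ pointwise, so $F(x,1)=\lim_n F_n(x,1)=x$ and $F(1,y)=\lim_n F_n(1,y)=y$, which means $(X,Y)$ also has uniform marginals, and hence $\mu$ assigns zero mass to every horizontal and vertical line. From~\eqref{measure_of_rectangle_from_F} we immediately get $\mu_n(R)\to\mu(R)$ for every rectangle $R=(x_1,x_2]\times(y_1,y_2]\subseteq[0,1]^2$. To conclude weak convergence, take any bounded continuous $f\colon[0,1]^2\to\RR$. Since $f$ is uniformly continuous on the compact square, for any $\eta>0$ one can choose a sufficiently fine grid partition of $[0,1]^2$ into rectangles $R_1,\dots,R_M$ and a step function $f_\eta=\sum_{k=1}^M c_k\,\mathds{1}[R_k]$ with $\|f-f_\eta\|_\infty\le\eta$. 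Then $\int f_\eta\,d\mu_n=\sum_k c_k\mu_n(R_k)\to\sum_k c_k\mu(R_k)=\int f_\eta\,d\mu$, and the two approximation errors $|\int(f-f_\eta)\,d\mu_n|$, $|\int(f-f_\eta)\,d\mu|$ are each at most $\eta$ uniformly in $n$. Letting $\eta\to0$ gives $\int f\,d\mu_n\to\int f\,d\mu$, i.e.\ $\mu_n\Rightarrow\mu$.

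I do not anticipate a serious obstacle here: the proof is a routine coupling of the Portmanteau theorem with an equicontinuity-on-a-compact-set argument. The one point to watch is that pointwise convergence of cdfs does \emph{not} in general upgrade to uniform convergence on the compact square, so the uniform-marginal hypothesis is essential via the $1$-Lipschitz estimate~\eqref{uniform_marginals_bound}; this is really what makes the lemma work and distinguishes our setting from the general weak-convergence theory of~\cite{billingsley}.
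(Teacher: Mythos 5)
Your proposal is correct and follows essentially the same route as the paper: Portmanteau plus the Lipschitz bound \eqref{uniform_marginals_bound} on a finite grid for the forward implication (this is exactly the paper's $5/k$ argument), and a uniform-continuity/grid decomposition together with \eqref{measure_of_rectangle_from_F} for the converse, which the paper phrases via max/min bounds on each box rather than an explicit step-function approximant but is the same argument. Your closing remark correctly identifies the uniform-marginal hypothesis as the crux of the forward direction.
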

\begin{proof}
  In order to prove that $\|F_n - F \|_\infty \to 0$ implies
  $(X_n,Y_n) \toinp (X,Y)$ we need to
  show~\eqref{def_weak_conv_of_prob_meas}. It is enough to prove that,
  for all $\varepsilon > 0$ and for every continuous function
  $f:[0,1]^2 \to \mathbb{R}$, we have
\begin{equation}\label{weak_conv_on_unit_square}
\limsup_{n \to \infty} \rbabs{\int_0^1 \int_0^1 f(x,y) \, \mathrm{d} \mu_n(x,y)- \int_0^1 \int_0^1 f(x,y) \, \mathrm{d} \mu(x,y)} \leq \varepsilon.
\end{equation}
Since $[0,1]^2$ is compact, $f$ is  uniformly continuous, so
 we can choose  $k \in \N$ such that if we define 
$B_k(i,j)= \left[\frac{i-1}{k}, \frac{i}{k}\right] \times  \left[\frac{j-1}{k}, \frac{j}{k} \right]$ then we have
\begin{equation*} \forall \, i,j \in [k] \; : \;
 \max \left\{ f(x,y)\, :\, (x,y) \in B_k(i,j) \right\}
-  
\min \left\{ f(x,y)\, :\, (x,y) \in B_k(i,j) \right\} 
\leq \varepsilon.
\end{equation*}
We bound the integral on the l.h.s.\ of \eqref{weak_conv_on_unit_square}:
\begin{equation*}
\begin{split}
\int_0^1 \int_0^1& f(x,y) \, (\mathrm{d} \mu_n(x,y)-\mathrm{d}
\mu(x,y)) \\
&\leq \sum_{i,j=1}^k \left( \mu_n(B_k(i,j)) \cdot \max_{B_k(i,j)} f  -
  \mu(B_k(i,j)) \cdot \min_{B_k(i,j)} f\right) \\
&\leq 
\sum_{i,j=1}^k \varepsilon \cdot \mu_n(B_k(i,j))
+ \sum_{i,j=1}^k \rbabs{\rbabs{f}}_\infty \cdot
\rbabs{\mu_n(B_k(i,j))-\mu(B_k(i,j))}\\  
&\stackrel{\eqref{measure_of_rectangle_from_F}}{\leq} 
\varepsilon + k^2 \|f\|_\infty \cdot 4 \|F_n-F\|_\infty.
\end{split}
\end{equation*}
With the same arguments, we may prove that 
$${\int_0^1 \int_0^1 f(x,y) \, (\mathrm{d} \mu(x,y)-\mathrm{d} \mu_n(x,y))} \leq \varepsilon + k^2 \|f\|_\infty \cdot 4 \|F_n-F\|_\infty,$$
so that
$$\rbabs{\int_0^1 \int_0^1 f(x,y) \, (\mathrm{d} \mu_n(x,y)-\mathrm{d} \mu(x,y))} \leq \varepsilon + k^2 \|f\|_\infty \cdot 4 \|F_n-F\|_\infty.$$
Now  \eqref{weak_conv_on_unit_square} follows from the above inequality and $\| F_n - F \|_\infty \to 0$.

\medskip

In order to prove that $ (X_n,Y_n) \toinp (X,Y)$ implies $\| F_n - F \|_\infty \to 0 $, we will have to make use of our assumption that $ X_n, \, Y_n \sim U[0,1]$ for every $n$ (this implication does not hold for general sequences of weakly convergent probability measures on $[0,1]^2$). 

We are going to show that
\begin{equation}\label{5_over_k}
\forall \; k \in \NN  \;\; \exists \; n_0 \in \N \;\; \forall \; n \geq n_0 \; : \; 
\| F_n - F \|_\infty=\sup_{x,y \in [0,1]} \rbabs{F_n(x,y)-F(x,y)} \leq \frac{5}{k}.
\end{equation}

Since $ X_n, \, Y_n \sim U[0,1]$ for every $n$ and $(X_n,Y_n) \toinp (X,Y)$, we know from \eqref{weak_conv_uniform_marginals} that $X,Y \sim U[0,1]$. By \eqref{boundary_zero_measure}, it follows that, for every rectangle $R \in [0,1]^2$, $\rbprob{ (X,Y) \in \partial R} = 0$, so that rectangles are continuity sets with respect to the associated measure $\mu$. We may then apply
\eqref{portemanteau} to deduce that $F_n(x,y) \to F(x,y)$ for all $x,y \in [0,1]$. Thus there exists $n_0 \in \N$ such that for all $n \geq n_0$ and $i,j \in \{0,1,\dots,k\}$ we have 
\begin{equation}\label{weak_convergence_on_finite_lattice}
\rbabs{F_n \left( \frac{i}{k},\frac{j}{k} \right) - F \left( \frac{i}{k},\frac{j}{k} \right)} \leq \frac{1}{k}
\end{equation}
In order to deduce \eqref{5_over_k} from this, pick $x,y \in [0,1]$ and let $i:=\lfloor k \cdot x \rfloor$ and $j:=\lfloor k \cdot y \rfloor$. Then
\begin{equation}\label{epsilon_over_three_trick}
\begin{split}
 &\rbabs{F_n \left(x,y\right)-F\left(x,y\right)} \\
&~~~~\leq \rbabs{F_n\left(x,y\right)-F_n\left(\frac{i}{k},\frac{j}{k}\right)} + 
\rbabs{F_n\left(\frac{i}{k},\frac{j}{k}\right)-F\left(\frac{i}{k},\frac{j}{k}\right) } + \rbabs{F\left(x,y\right)-
F\left(\frac{i}{k},\frac{j}{k}\right)}  \\
&~~~~\stackrel{\eqref{uniform_marginals_bound}}{\leq} \frac{2}{k} + \rbabs{F_n\left(\frac{i}{k},\frac{j}{k}\right)-F\left(\frac{i}{k},\frac{j}{k}\right) }+ \frac{2}{k} 
\stackrel{\eqref{weak_convergence_on_finite_lattice}}{\leq} \frac{5}{k},
\end{split}
\end{equation}
which concludes the proof.
\end{proof} 

\subsection{Regular conditional probabilities}\label{subsection:reg_cond_prob} The aim of this subsection is to show that limit permutations are (essentially) in one-to-one correspondence with probability measures on the unit square with $U[0,1]$ marginals:  $Z$ is the \emph{regular conditional distribution function} of $Y$ with respect to $X$. Heuristically we may think about $Z$ as the function satisfying  $Z(x,y)=\rbcondprob{Y \leq y}{X=x}$, although the condition $X=x$ has zero probability.

\medskip

Denote by $\mathcal{B}[0,1]$ the $\sigma$-algebra of Borel sets of the
unit interval $[0,1]$ and recall the definition of limit permutation,
given in Definition~\ref{def.perm.lim}.  

\begin{lemma}\label{regular}
  Let $(X,Y)$ be a $[0,1]^2$-valued random variable satisfying $X, \,
  Y \sim U[0,1]$.
\begin{itemize}
\item[(\textit{a})] Then there exists a limit permutation $Z$ such that
\begin{equation}\label{reg_cond_borel_formula}
\forall \, B \in \mathcal{B}[0,1], \; \; \forall y \in [0,1] \; : \; 
  \rbprob{X \in B,\, Y \leq y}=\int_0^1  Z(x,y) \rb_ind[ x \in B]   \, {d} x.
\end{equation}
\item[(\textit{b})] Moreover, if $\ykwhat{Z}$ is another limit permutation satisfying \eqref{reg_cond_borel_formula} then
\begin{equation}\label{uniqueness_of_regular_condexp}
 \int_0^1 \rb_ind[\, \exists \, y \in [0,1] \, : \, Z(x,y)\neq \ykwhat{Z}(x,y)\, ] \, {d}x=0. 
\end{equation}
\end{itemize}
\end{lemma}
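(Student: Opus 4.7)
The plan is to construct $Z$ as a regular conditional distribution for $Y$ given $X$, by specifying it first on a countable dense set of $y$-values via the Radon--Nikodym theorem, and then extending by right-continuity after a cleanup on a Lebesgue-null set of ``bad'' $x$-values. The assumption $X \sim U[0,1]$ is what makes the Radon--Nikodym densities below automatically bounded and $[0,1]$-valued, so one does not leave the space of cdfs.

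For part~(\textit{a}), I would, for each rational $q \in \mathbb{Q} \cap [0,1]$, define the set function $\nu_q(B) := \rbprob{X \in B,\, Y \leq q}$ on $\mathcal{B}[0,1]$. Since $X \sim U[0,1]$ we have $\nu_q(B) \leq \rbprob{X \in B} = |B|$, so $\nu_q$ is absolutely continuous with respect to Lebesgue measure and the Radon--Nikodym theorem yields a Borel function $g_q : [0,1] \to [0,1]$ with $\nu_q(B) = \int_B g_q(x)\,dx$. I would then isolate a single Lebesgue-null set $N \subset [0,1]$ off of which the family $\{g_q\}_{q \in \mathbb{Q} \cap [0,1]}$ is already well-behaved, namely: for every pair of rationals $q < q'$ the inequality $g_q(x) \leq g_{q'}(x)$ holds, $g_1(x) = 1$, and the right-limit $\lim_{q \downarrow r,\, q \in \mathbb{Q}} g_q(x)$ equals $g_r(x)$ for every rational $r \in [0,1]$. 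Each of these conditions holds almost everywhere (monotonicity by uniqueness of Radon--Nikodym; right-continuity by dominated convergence applied to $\mathbf{1}\{Y \leq q_n\} \downarrow \mathbf{1}\{Y \leq r\}$), and there are only countably many exceptional events to union. I would then define
\[
  Z(x,y) := \begin{cases} \inf\bigl\{g_q(x) : q \in \mathbb{Q}\cap[0,1],\; q \geq y\bigr\} & \text{if } x \notin N, \\[2pt] y & \text{if } x \in N. \end{cases}
\]
For $x \notin N$, $Z(x,\cdot)$ is non-decreasing, right-continuous, $[0,1]$-valued, and satisfies $Z(x,1)=1$, so it is a cdf; for $x \in N$ it is the uniform cdf. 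Hence Definition~\ref{def.perm.lim}(a) holds pointwise in $x$. The identity~\eqref{reg_cond_borel_formula} holds for rational $y$ by the very definition of $g_q$, and extends to all $y \in [0,1]$ by dominated convergence along rational sequences $q_n \downarrow y$ on both sides. Finally, setting $B = [0,1]$ in~\eqref{reg_cond_borel_formula} and using $Y \sim U[0,1]$ gives $\int_0^1 Z(x,y)\,dx = y$, which is Definition~\ref{def.perm.lim}(b).

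The main obstacle is precisely the ``for every $x$'' quantifier in Definition~\ref{def.perm.lim}(a): Radon--Nikodym only determines $g_q$ up to a null set, and that exceptional set depends on $q$, so one must aggregate countably many null events and then \emph{redefine} $Z$ arbitrarily on the residual null set $N$ in order to guarantee that $Z(x,\cdot)$ is a genuine cdf pointwise in $x$, not merely for almost every $x$. The monotone infimum over rationals is the standard device that makes this aggregation go through while preserving right-continuity and the range $[0,1]$.

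For part~(\textit{b}), I would fix $y \in [0,1]$ and apply~\eqref{reg_cond_borel_formula} for both $Z$ and $\ykwhat{Z}$ to obtain, for every Borel set $B$,
\[
   \int_B Z(x,y)\,dx \;=\; \rbprob{X \in B,\, Y \leq y} \;=\; \int_B \ykwhat{Z}(x,y)\,dx,
\]
so $Z(\cdot,y) = \ykwhat{Z}(\cdot,y)$ Lebesgue-almost everywhere. Taking the union of these exceptional null sets over $y \in \mathbb{Q} \cap [0,1]$ produces a single Lebesgue-null set $N^\star$ such that $Z(x,q) = \ykwhat{Z}(x,q)$ for every $x \notin N^\star$ and every rational $q$. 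Since both $Z(x,\cdot)$ and $\ykwhat{Z}(x,\cdot)$ are right-continuous in $y$ for every $x$, the identity extends from rationals to all $y \in [0,1]$ whenever $x \notin N^\star$, and~\eqref{uniqueness_of_regular_condexp} follows.
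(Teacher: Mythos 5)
Your proposal is correct and follows essentially the same route as the paper's appendix proof: the paper defines $\tilde Z(X,\tilde y)=\rbcondexpect{\rb_ind[Y\le\tilde y]}{X}$, which is exactly your Radon--Nikodym derivative $g_{\tilde y}$ of $B\mapsto\rbprob{X\in B,\,Y\le\tilde y}$ with respect to the (uniform) law of $X$, and then performs the same aggregation of countably many null sets (monotonicity and right-continuity along rationals), the same redefinition on the residual null set, and the same monotone infimum over rationals to produce a genuine cdf for every $x$. Part~(\textit{b}) is likewise identical: almost-sure uniqueness at each rational followed by right-continuity to pass to all $y$.
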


The proof of Lemma~\ref{regular} is a standard measure-theoretic
argument that is almost identical to that of Theorem~4 of Chapter~II,
Section~7 of \cite{shity} about the existence of \emph{regular
  conditional distribution functions}. Nevertheless we give a full
proof of Lemma~\ref{regular} in the appendix in order to keep the
paper self-contained.

\medskip

Lemma \ref{regular} tells us how to obtain a limit permutation $Z$ from a $[0,1]^2$-valued random variable $(X,Y)$ satisfying $X, \, Y \sim U[0,1]$.
 Conversely, with a limit permutation $Z$, we may associate such a random variable $(X,Y)$ as follows.
\begin{definition}\label{def:X_Y_associated_to_Z}
 Let $Z \in \mathcal{Z}$. We generate the $[0,1]^2$-valued random variable $(X,Y)$ \emph{associated with} $Z$ in the following way.We first
pick $X \sim U[0,1]$, and, given $X$, the random variable $Y$ is generated according to the cdf $Z(X,\cdot)$. 
\end{definition}

It is easy to check that with this definition $Z$ is indeed the regular conditional distribution
function of $Y$ with respect to $X$ in the sense of Lemma \ref{regular}.

The joint distribution function of $(X,Y)$ associated with $Z$ can be expressed as
\begin{equation}\label{distribution_fn}
F(x,y)= \rbprob{ X \leq x,\, Y \leq y}  \stackrel{\eqref{reg_cond_borel_formula}}{=}\int_0^x Z(x,y)\, {d}x.
\end{equation}
 By Definition \ref{def.perm.lim} (b) we get $F(1,y)=y$ which is
equivalent to $Y \sim U[0,1]$.

\subsection{Two simple remarks about permutation convergence}\label{sub_remarks} In this subsection, we prove two simple facts about the convergence of permutation sequences that were mentioned in the introduction. We show that every convergent sequence $(\sigma_n)_{n \in \N}$ such that $|\sigma_n| \not \to \infty$ must be eventually constant, and we establish that every permutation sequence contains a convergent subsequence.

For the first, we remind the reader that Theorem~\ref{teorema_principal} is only stated for permutation sequences $(\sigma_n)_{n \in \N}$ whose lengths tend to infinity, as we claimed that every other convergent sequence is eventually constant. In light of this, we prove this claim prior to addressing the main results. As before, if $\sigma \in S_n$ we write $\rbabs{\sigma}=n$. Recall the notion of convergence of permutation sequences from Definition \ref{def_conv}. 
\begin{claim}\label{claim_eventually_constant}
 Let $(\sigma_n)_{n \in \mathbb{N}}$ be a convergent permutation sequence such that $|\sigma_n| \not \to \infty$. Then
 the  sequence $(\sigma_n)_{n \in \mathbb{N}}$ is eventually constant, that is,
there is a permutation $\sigma$ and an $n_0 \in \N$ such that $n \geq n_0$ implies $\sigma_n = \sigma$.
\end{claim}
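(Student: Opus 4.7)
The plan is to extract, by a double pigeonhole, a length $\ell$ and a permutation $\tau \in S_\ell$ that appear infinitely often along the sequence, and then to use the assumed convergence of the densities $(t(\tau',\sigma_n))_n$ for \emph{every} fixed $\tau'$ to pin down both the length and the identity of $\sigma_n$ for all large $n$.

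First I would argue as follows. Since $|\sigma_n| \not\to \infty$, some integer $M$ satisfies $|\sigma_n| \leq M$ for infinitely many $n$; applying pigeonhole to the finitely many possible values of $|\sigma_n|$ and then to the finitely many permutations of that length, there exist $\ell \leq M$, a permutation $\tau \in S_\ell$, and an infinite set $I \subseteq \N$ such that $\sigma_n = \tau$ for every $n \in I$. Along this infinite subsequence one has $t(\tau,\sigma_n) = t(\tau,\tau) = 1$, so the assumed convergence of $(t(\tau,\sigma_n))_n$ yields
\begin{equation*}
\lim_{n \to \infty} t(\tau,\sigma_n) \;=\; 1.
\end{equation*}
Similarly, for every fixed $\rho \in S_{\ell+1}$ and every $n \in I$ we have $|\sigma_n| = \ell < \ell+1$, whence $t(\rho,\sigma_n) = 0$; hence $t(\rho,\sigma_n) \to 0$ for every such $\rho$.

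The next step is to show that $|\sigma_n| \leq \ell$ for all sufficiently large $n$. The key identity is that whenever $|\sigma_n| \geq \ell+1$, each ordered $(\ell+1)$-tuple in $[|\sigma_n|]^{\ell+1}_<$ realises exactly one pattern in $S_{\ell+1}$, so
\begin{equation*}
\sum_{\rho \in S_{\ell+1}} t(\rho,\sigma_n) \;=\; \binom{|\sigma_n|}{\ell+1}^{-1}\!\sum_{\rho \in S_{\ell+1}}\Lambda(\rho,\sigma_n) \;=\; 1.
\end{equation*}
But the right-hand side is the constant $1$, while the left-hand side tends to $0$ as $n \to \infty$ (finite sum of sequences each tending to $0$). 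Thus the set of $n$ with $|\sigma_n| \geq \ell+1$ must be finite, giving $|\sigma_n| \leq \ell$ for all $n$ large enough.

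To complete the argument, I would use the reverse direction: $t(\tau,\sigma_n) \to 1$ forces $|\sigma_n| \geq \ell$ eventually, because $t(\tau,\sigma_n) = 0$ whenever $|\sigma_n| < \ell = |\tau|$ by \eqref{def_subperm_density_formula_cases}. Combining the two inequalities gives $|\sigma_n| = \ell$ for all large $n$. For such $n$, the formula for $t(\tau,\sigma_n)$ collapses to $\Lambda(\tau,\sigma_n) \in \{0,1\}$, with value $1$ exactly when $\sigma_n = \tau$; since $t(\tau,\sigma_n) \to 1$, we must have $t(\tau,\sigma_n) = 1$, hence $\sigma_n = \tau$, for every sufficiently large $n$. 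This is precisely the claim. No step looks delicate: the only mild care point is ensuring that the pigeonhole argument lands on a fixed length and a fixed permutation before invoking the hypothesis that each density $t(\tau',\sigma_n)$ individually converges.
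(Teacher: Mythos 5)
Your proof is correct and rests on the same two ingredients as the paper's: the identity $\sum_{\rho\in S_k}t(\rho,\pi)=1$ whenever $k\le|\pi|$ (and $=0$ otherwise), used to show the lengths stabilise, and the fact that $t(\tau,\pi)$ is the indicator of $\tau=\pi$ when $|\tau|=|\pi|$, used to pin down the permutation. The only cosmetic difference is that you first pigeonhole to a pattern $\tau$ recurring infinitely often and track the specific densities $t(\tau,\cdot)$ and $t(\rho,\cdot)$ for $\rho\in S_{\ell+1}$, whereas the paper argues via $\liminf=\limsup$ of the lengths; both routes are sound.
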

\begin{proof}
  It follows from \eqref{def_subperm_density_formula_cases} that 
  $\sum_{\tau \in S_k} t(\tau, \pi)=\rb_ind [ k \leq
  \rbabs{\pi}]$ for any $k \in \N$ and any permutation~$\pi$. By
  Definition \ref{def_conv} we get that for any fixed $k \in \N$ the
  limit $\lim_{n \to \infty} \sum_{\tau \in S_k} t(\tau,
  \sigma_n)=\lim_{n \to \infty} \rb_ind [ k \leq \rbabs{\sigma_n} ] $
  exists and must be equal to $0$ or $1$.

  From this and our assumption that $\liminf_{n \to \infty}
  \rbabs{\sigma_n}<\infty$ one deduces that $\liminf_{n \to
    \infty} \rbabs{\sigma_n}=\limsup_{n \to \infty} \rbabs{\sigma_n}$;
  thus there is some $m \in \N$ such that $\rbabs{\sigma_n}=m$ if $n$
  is large enough.

  Now if $\tau, \pi \in S_m$ then $t(\tau,\pi)=\rb_ind[\tau=\pi]$, and
  hence $\lim_{n \to \infty} t(\tau, \sigma_n)$ must be equal to~$0$
  or~$1$ for all $\tau \in S_m$.  From this and
  $\rbabs{S_m}=m!<\infty$ it is straightforward to deduce that the
  sequence $(\sigma_n)_{n \in \mathbb{N}}$ is eventually constant.
\end{proof}

To conclude this section, we show that permutation sequences always contain convergent subsequences.
\begin{lemma}\label{conv_subseq}
Every permutation sequence has a convergent subsequence.
\end{lemma}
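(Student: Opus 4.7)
The plan is a classical Cantor diagonalization, exploiting the fact that there are only countably many finite permutations and that $t(\tau,\sigma_n) \in [0,1]$ for every $\tau$ and $n$.

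First I would fix an enumeration $\mathcal{S} = \{\tau_1, \tau_2, \tau_3, \ldots\}$ of all finite permutations; this is possible because $\mathcal{S} = \bigcup_{k=1}^{\infty} S_k$ is a countable union of finite sets. Given an arbitrary permutation sequence $(\sigma_n)_{n \in \mathbb{N}}$, the real sequence $(t(\tau_1, \sigma_n))_{n \in \mathbb{N}}$ takes values in $[0,1]$, so by the Bolzano--Weierstrass theorem there is a subsequence $(\sigma_n^{(1)})_{n \in \mathbb{N}}$ of $(\sigma_n)_{n \in \mathbb{N}}$ along which $t(\tau_1, \sigma_n^{(1)})$ converges.

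I would then iterate this construction: assuming that $(\sigma_n^{(k)})_{n \in \mathbb{N}}$ has been defined so that $t(\tau_j, \sigma_n^{(k)})$ converges for every $j \leq k$, I extract from it a further subsequence $(\sigma_n^{(k+1)})_{n \in \mathbb{N}}$ along which $t(\tau_{k+1}, \sigma_n^{(k+1)})$ also converges, again by Bolzano--Weierstrass. Because convergence is preserved under passage to a subsequence, $t(\tau_j, \sigma_n^{(k+1)})$ still converges for every $j \leq k+1$.

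Finally I would consider the diagonal sequence $(\sigma_n^{(n)})_{n \in \mathbb{N}}$. For any fixed $k$, the tail $(\sigma_n^{(n)})_{n \geq k}$ is a subsequence of $(\sigma_n^{(k)})_{n \in \mathbb{N}}$, so $t(\tau_k, \sigma_n^{(n)})$ converges as $n \to \infty$. Since this holds for every $k$ and $\{\tau_k : k \in \mathbb{N}\} = \mathcal{S}$, the diagonal sequence satisfies Definition~\ref{def_conv} and is therefore convergent. There is no real obstacle here; the argument is a clean application of the countability of $\mathcal{S}$ together with sequential compactness of $[0,1]$ (equivalently, of $[0,1]^{\mathbb{N}}$ in the product topology).
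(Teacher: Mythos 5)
Your proposal is correct and follows essentially the same standard diagonalization argument as the paper: enumerate the countably many finite permutations, extract nested subsequences via Bolzano--Weierstrass using the boundedness of the densities in $[0,1]$, and pass to the diagonal. No issues.
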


\begin{proof}
Let $(\sigma_n)_{n \in \mathbb{N}}$ be a permutation sequence.
 We shall find a convergent subsequence of $(\sigma_n)_{n \in \mathbb{N}}$ by a standard diagonalization argument.
 Since $S_n$ is finite for every $n \geq 1$, the set of $S=\bigcup_{n=1}^\infty S_n$ of all finite permutations is countable,
 say $S=(\tau_m)_{m \in \mathbb{N}}$.

If $(\sigma_n)_{n \in \mathbb{N}}$ does not converge, starting with $\tau_1$,
 we let $(\sigma_n^1)_{n \in \mathbb{N}}$ be a subsequence of $(\sigma_n)_{n \in \mathbb{N}}$ for which the bounded real
 sequence $(t(\tau_1,\sigma^1_n))_{n \in \mathbb{N}}$ converges. Inductively, for $m \geq 2$, we let $(\sigma_n^m)_{n \in \mathbb{N}}$ be 
a subsequence of $(\sigma_n^{m-1})_{n \in \mathbb{N}}$ such that $(t(\tau_m,\sigma^{m-1}_n))_{n \in \mathbb{N}}$ converges. 
It is now easy to see that the diagonal sequence $(\sigma_n^n)_{n \in \mathbb{N}}$ is such that, for every positive integer $m$, 
the sequence $(t(\tau_m,\sigma_n^n))_{n \in \mathbb{N}}$ converges. In other words, the sequence $(\sigma_n^n)_{n \in \mathbb{N}}$ is 
a convergent subsequence of $(\sigma_n)$.
\end{proof}

\section{$Z$-random permutations and subpermutation densities}\label{section:Z_random_perm_and_subperm_dens}

In this section we define the concept of a random subpermutation $\sigma(k,\pi)$ of length $k$ of a permutation $\pi$ and relate
the subpermutation densities $t(\tau,\pi)$ to the distribution of  $\sigma(k,\pi)$. 
Analogously, given a limit permutation $Z$ we define the $Z$-random permutation $\sigma(k,Z)$ of length $k$  and
the subpermutation densities $t(\tau,Z)$. In order to treat permutations and limit permutations in a unified way we assign a limit 
permutation $Z_\sigma$ to every permutation $\sigma$ in such a way that their subpermutation densities are close to each other.

 \medskip

First we recall Definition \ref{def.dens.perm1}, the definition of the density $t(\tau,\pi)$ of $\tau \in S_k$ in
$\pi \in S_n$. Now we give a probabilistic interpretation of this quantity.

\begin{definition}\label{def:random_subpermutation}
 Let $\pi \in S_n$. For $k \leq n$,  the \emph{random subpermutation} $\sigma(k,\pi)$ of $\pi$ of \emph{length} $k$ is the  random element of $S_k$  generated in the following way. Choose an element
 $(\mathcal{X}^*_1,\ldots,\mathcal{X}^*_k) \in [n]^k_<$ uniformly from the $\binom{n}{k}=\rbabs{ [n]^k_<}$ possibilities and 
let $\sigma=\sigma(k,\pi)$
be the permutation of $[k]$ satisfying
  \[ \forall \, i,j \in [k] \; : \;  \pi(\mathcal{X}^*_i) < \pi(\mathcal{X}^*_j)\; \iff \;  \sigma(i)<\sigma(j).\] 
\end{definition}
In plain words: $\sigma(k,\pi)$ encodes the relative order of $(\pi(\mathcal{X}^*_1),\dots,\pi(\mathcal{X}^*_k))$. It is obvious from 
Definition \ref{def.dens.perm1} that we have
\begin{equation}\label{probabilistic_interpretation_of_t_tau_sigma}
\forall \, \tau \in S_k \; : \;  \rbprob{ \sigma(k,\pi) =\tau} = t(\tau,\pi).
\end{equation}

\begin{definition}[$Z$-random permutation (Version 2)]
\label{def:Z_random_perm}
Given a limit permutation $Z$ and a positive integer $n$, a \emph{$Z$-random permutation} $\sigma(n,Z)$ is a permutation of $[n]$ generated as follows. Recall Definition \ref{def:X_Y_associated_to_Z}, and let $(X_1,Y_1),\ldots,(X_n,Y_n)$ be independent and identically distributed $[0,1]^2$-valued random variables with distribution associated with $Z$.  
These pairs define the permutations $R,S \in S_n$,
where 
\begin{equation}\label{def_R_S}
R(i)= |\{j~:~X_j \leq X_i\}|, \qquad   S(i)= |\{j~:~Y_j \leq Y_i\}|.
\end{equation}
The random permutation $\sigma=\sigma(n,Z)$ is given by $\sigma(n,Z)=S \circ R^{-1}$, that is, $\sigma(i)=S(R^{-1}(i))$ for every $i\in[n]$.
\end{definition}
Note that  $Z$-random permutations are well-defined with probability one, because the probability that either $(X_1,\ldots,X_n)$ or $(Y_1,\ldots,Y_n)$ has repeated elements (i.e., $S$ or $R$ is not a well-defined permutation) is zero. This follows directly from the fact that both $(X_1,\dots,X_n)$ and $(Y_1,\dots,Y_n)$ are independent and uniformly distributed on
$[0,1]$. It is easy to see that $\sigma(n,Z)$ in Definition \ref{def:Z_random_perm}  is equivalent to the one given in Definition~\ref{intro_def_of_Z_random_perm_page}, 
since the $X^*_i$ and $Y^*_j$ defined in the latter may be related to $X$ and $Y$ by $X_{R^{-1}(i)}=X^*_i$ and $Y_{j}=Y^*_{S(j)}$.
%As an example, suppose that $n=4$, $X=(0.7,0.3,0.9,0.2)$ and $Y=(0.8,0.1,0.5,0.3)$.
%Then the permutations $R$ and $S$ are given by $R=(3,2,4,1)$ and $S=(4,1,3,2)$, so that %$\sigma=S\cdot R^{-1}=(2,1,4,3)$.
Moreover, observe that one could alternatively define a random permutation with the same distribution as $\sigma(n,Z)$ by first generating a  sequence $(X^*_1,\dots,X^*_n)$  uniformly distributed on $[0,1]^n_<$ and then drawing each $\ykwhat{Y}_i$ in $[0,1]$ independently according to the probability distribution induced by $Z(X^*_i,\cdot)$. The random permutation $\sigma(n,Z)$ is given by the order of the elements in $(\ykwhat{Y}_1,\ldots,\ykwhat{Y}_n)$. 
This definition of $\sigma(n,Z)$ resembles Definition \ref{def:random_subpermutation}.

\begin{definition}\label{def:subperm_density_in_Z}
Given a limit permutation $Z$ and $\tau \in S_k$ the \emph{density} of $\tau$ \emph{in} $Z$ is given by
\begin{equation}\label{formula_subperm_density_in_Z}
t(\tau,Z)=\rbprob{\sigma(k,Z)=\tau}. 
\end{equation}
\end{definition}

It will be convenient for us to associate a limit permutation $Z_\sigma$ with every permutation $\sigma$ in such a way that $t(\tau,Z_\sigma)$ of \eqref{formula_subperm_density_in_Z}
is close to $t(\tau,\sigma)$ of \eqref{probabilistic_interpretation_of_t_tau_sigma}.

\begin{definition}\label{def:Z_from_perm}
Given a permutation $\sigma \in S_n$ define  $Z_\sigma \in \mathcal{Z}$ by defining the associated
$[0,1]^2$-valued random $(X_\sigma, Y_\sigma)$ to have joint density function 
\begin{equation}\label{density_function_of_perm}
f_\sigma(x,y)= n \cdot \rb_ind [ \; \sigma( \lceil n \cdot x \rceil) =   \lceil n \cdot y \rceil \; ].
\end{equation}
 \end{definition}
Note that the support of the density function $f_\sigma(x,y)$ ``looks like'' the matrix $A_\sigma=\left( \delta_{\sigma(i),j} \right)_{i,j=1}^n$
 of the permutation $\sigma$, the row and column sums
of which are all equal to 1. From this it easily follows that we  have $X_\sigma, Y_\sigma \sim U[0,1]$; thus, by Lemma \ref{regular} we can define $Z_\sigma$ almost surely uniquely. Indeed,
\begin{equation}\label{ze_szigma}
Z_{\sigma}(x,y)=\int_0^y f_{\sigma}(x,\tilde{y})\, {d}\tilde{y}.
\end{equation}
 
\begin{lemma}\label{permutation_and_associated_limit_permutation_subperm_densities_are_close}
  Let $\tau \in S_k$, $\sigma \in S_n$, $k \leq n$. Then
\begin{equation}\label{smoothing_small_change}
 \rbabs{ t(\tau, \sigma) -t(\tau,Z_{\sigma})} \leq \frac{1}{n}  \binom{k}{2}.
  \end{equation}
\end{lemma}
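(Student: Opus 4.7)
The plan is to use the alternative description of $\sigma(k,Z_\sigma)$ mentioned right after Definition~\ref{def:Z_random_perm} and to couple it with the random subpermutation $\sigma(k,\sigma)$ via the ``cell index'' of the sample points. Draw $(X_1,Y_1),\dots,(X_k,Y_k)$ i.i.d.\ with joint density $f_\sigma$ given in \eqref{density_function_of_perm}, and set
\[
I_j\;=\;\lceil n X_j\rceil,\qquad j\in[k].
\]
Because $f_\sigma$ is supported on the $n$ cells $[\tfrac{i-1}{n},\tfrac{i}{n}]\times[\tfrac{\sigma(i)-1}{n},\tfrac{\sigma(i)}{n}]$ with constant density $n$, the variables $I_1,\dots,I_k$ are i.i.d.\ uniform on $[n]$, and almost surely $\lceil n Y_j\rceil=\sigma(I_j)$.

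Let $A$ be the event that $I_1,\dots,I_k$ are pairwise distinct. On $A$ the points $X_1,\dots,X_k$ lie in disjoint subintervals of length $1/n$, so the relative order of $(X_1,\dots,X_k)$ is the relative order of $(I_1,\dots,I_k)$. Since $\sigma$ is a bijection, $\sigma(I_1),\dots,\sigma(I_k)$ are also distinct on $A$, so analogously the relative order of $(Y_1,\dots,Y_k)$ coincides with that of $(\sigma(I_1),\dots,\sigma(I_k))$. Consequently, on $A$, the permutation $\sigma(k,Z_\sigma)$ is exactly the pattern of $\sigma$ read off at the (sorted) indices $\{I_1,\dots,I_k\}$. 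Conditional on $A$ the vector $(I_1,\dots,I_k)$ is a uniform ordered $k$-tuple of distinct elements of $[n]$, so after sorting it is uniform on $[n]^k_<$; by Definition~\ref{def.dens.perm1} and \eqref{probabilistic_interpretation_of_t_tau_sigma} this yields
\[
\rbprob{\sigma(k,Z_\sigma)=\tau\,\big|\,A}\;=\;t(\tau,\sigma).
\]

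Decomposing on $A$ and $A^c$ gives
\[
t(\tau,Z_\sigma)\;=\;t(\tau,\sigma)\,\rbprob{A}+\rbprob{\sigma(k,Z_\sigma)=\tau,\,A^c},
\]
so
\[
\bigl|t(\tau,Z_\sigma)-t(\tau,\sigma)\bigr|\;\le\;\rbprob{A^c}.
\]
Finally, a union bound over pairs, using $\rbprob{I_i=I_j}=1/n$, gives $\rbprob{A^c}\le\binom{k}{2}/n$, which is exactly \eqref{smoothing_small_change}.

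The only delicate step is the identification on $A$: one must confirm that the relative order of the $X$'s (resp.\ $Y$'s) really is the order of the cell indices $I_j$ (resp.\ $\sigma(I_j)$), and that the resulting pattern extracted from $\sigma$ on $\{I_1,\dots,I_k\}$ is distributed precisely as $\sigma(k,\sigma)$ under the uniform $[n]^k_<$ distribution. This is the main conceptual point; once it is checked, the remaining estimate is a birthday-style union bound.
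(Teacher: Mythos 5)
Your proof is correct and follows essentially the same route as the paper: sample $(X_j,Y_j)$ i.i.d.\ from the density $f_\sigma$, condition on the cell indices $\lceil nX_j\rceil$ being distinct (under which the sample pattern is exactly a uniform random subpermutation of $\sigma$), and bound the failure probability by $\binom{k}{2}/n$. The only cosmetic difference is that you re-derive the inequality $\rbabs{\rbprob{A}-\rbcondprob{A}{B}}\leq \rbprob{B^c}$ via an explicit decomposition, whereas the paper cites it as a standalone fact.
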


\begin{proof}
If $A,B$ are events in a probability space then it is easy to check that
\begin{equation}\label{condition_can_be_dropped_if_large}
 \rbabs{\rbprob{A} -\rbcondprob{A}{B}} \leq 1-\rbprob{B} =\rbprob{B^c}.
 \end{equation}
We are going to apply this inequality to prove \eqref{smoothing_small_change}. To this end, let $(X_i,Y_i)$, $i=1,\dots,k$, be i.i.d.\,with the same joint distribution as $(X_\sigma,Y_\sigma)$
of Definition \ref{def:Z_from_perm}. Then $X_i$, $i=1,\dots,k$, are i.i.d. and uniform on $[0,1]$.

Let $A$ be the event that the relative order of the vertical 
coordinates of $(X_i,Y_i)$, $i=1,\dots,k$ is $\tau$ with respect to their ordered horizontal coordinates, i.e., with the notation of \eqref{def_R_S}, let $A=\{ \tau=S \circ R^{-1} \}$.

Define $\mathcal{X}_i:=\lceil n \cdot X_i \rceil$ for each $i \in [k]$.
We define the event
\begin{equation}\label{def_event_B}
B= \{ \forall\; 1\leq i < j \leq k\,:\, \mathcal{X}_i \neq \mathcal{X}_j \}
\end{equation}
 and let
$(\mathcal{X}^*_1,\mathcal{X}^*_2,\ldots,\mathcal{X}^*_k)$ denote the $k$-tuple that we get by arranging 
 $(\mathcal{X}_1,\mathcal{X}_2,\ldots,\mathcal{X}_k)$ in increasing order. Note that $\lceil n \cdot Y_i \rceil=\sigma(\mathcal{X}_i)$ by \eqref{density_function_of_perm}, and that
under the condition that the event $B$ occurs 
the random $k$-tuple
$(\mathcal{X}^*_1,\mathcal{X}^*_2,\ldots,\mathcal{X}^*_k)$ is uniformly distributed on $[n]^k_<$.
 Thus we have 
 \[ \rbprob{A}\stackrel{\eqref{formula_subperm_density_in_Z}}{=}t(\tau,Z_{\sigma}),
 \quad \rbcondprob{A}{B} \stackrel{\eqref{probabilistic_interpretation_of_t_tau_sigma}}{=} t(\tau,\sigma) \quad \textrm{and} 
 \quad \rbprob{B^c}\leq \frac{1}{n}  \binom{k}{2},\]
and our result follows from \eqref{condition_can_be_dropped_if_large}.
\end{proof}

\section{Rectangular distance}\label{section:distance}

In this section we introduce the notion of rectangular distance $d_{\square}(\sigma_1,\sigma_2)$ between two permutations and also give the
analogous definition $d_{\square}(Z_1,Z_2)$ of rectangular distance between two limit permutations. Using the notion of the limit permutation $Z_\sigma$ assigned to a permutation $\sigma$, we make a connection between these two definitions of $d_{\square}$.

In Subsection \ref{subsection:rectang_subperm} we prove that the rectangular distance between $Z$ and the random subpermutation 
$\sigma(k, Z)$ is small with high probability if $1 \ll k$. In other words, we show that $Z$ can be recovered from the sample $\sigma(k, Z)$ with small error. From this
we derive an analogous statement which quantifies how small $d_{\square}(\pi,\sigma(k,\pi))$ is. This will prove to be useful in the characterization of testability in~\cite{rudini08d}.

\subsection{Preliminary facts about the rectangular distance} Let $I[n]$ be the set of all \emph{intervals} in $[n]$, that is, the set of all subsets of the form $\{x \in [n]~:~ a \leq x < b\}$,
 where $a,b \in [n+1]$ are called the \emph{endpoints} of the interval. Given a permutation $\sigma$ on $[n]$, Cooper~\cite{cooper2}
 defines the \emph{discrepancy of $\sigma$} as
\begin{equation}\label{cooper_discrepancy}
 D(\sigma)=\max_{S,T\in I[n]}  \left||\sigma(S)\cap T|-\frac{|S||T|}{n}\right|.
\end{equation}

This is used to measure the ``randomness'' of a permutation. Indeed, sequences with low discrepancy, i.e.,
 for which $D(\sigma)=o(n)$, are said to be \emph{quasi-random}.
 We use a normalized version of the same concept to introduce a distance between permutations.
\begin{definition}
Given permutations $\sigma_1,\sigma_2 \in S_n$, the \emph{rectangular distance} between $\sigma_1$ and $\sigma_2$ is given by
\begin{equation}\label{def.rec}
  d_{\square}(\sigma_1,\sigma_2)\ =\ \frac{1}{n}\max_{S,T\in I[n]}\ \Big||\sigma_1(S)\cap T|-|\sigma_2(S)\cap T|\Big|.
\end{equation}
\end{definition}
An analogous metric  $d_{\square}$ may be defined to measure the distance between   limit permutations. Given  $Z_1,Z_2 \in \mathcal{Z}$, and the associated $[0,1]^2$-valued random variables $(X_1,Y_1)$ and $(X_2,Y_2)$ 
(see Definition \ref{def:X_Y_associated_to_Z}),
the \emph{rectangular distance} between $Z_1$ and $Z_2$ is defined by
\begin{multline}\label{rectangular_distance_def_eq}
  d_{\square}(Z_1,Z_2)\ =\ \sup_{\substack{x_1<x_2\in [0,1]\\y_1<y_2\in[0,1]}}\
  \Big|\int_{x_1}^{x_2} \left( Z_1(x,y_2)-Z_1(x,y_1)\right) {d}x
  - \int_{x_1}^{x_2} \left(Z_2(x,y_2)-Z_2(x,y_1)\right) {d}x \Big|  \\
\stackrel{\eqref{reg_cond_borel_formula},\eqref{boundary_zero_measure}}{=} \sup_{\substack{x_1<x_2\in [0,1]\\y_1<y_2\in[0,1]}}\
  \Big|\rbprob{X_1 \in [x_1,x_2], \, Y_1 \in [y_1,y_2]}
  -  \rbprob{X_2 \in [x_1,x_2], \,  Y_2 \in [y_1,y_2]} \Big|.
\end{multline}

Denote by $F_1$ and $F_2$ the joint probability distribution function (see \eqref{distribution_fn}) of $(X_1,Y_1)$ and $(X_2,Y_2)$, respectively. 
We define
\begin{equation}\label{def_infty_norm}
d_\infty(Z_1,Z_2)=\|F_1-F_2\|_\infty=
\sup_{x,y \in [0,1]} 
  \Big|\rbprob{X_1 \leq x, \, Y_1 \leq y}
  -  \rbprob{X_2 \leq x, \,  Y_2 \leq y} \Big|.
\end{equation}
Using identity \eqref{measure_of_rectangle_from_F}, it is easy to deduce that
\begin{equation}\label{d_infty_d_square}
  d_\infty(Z_1,Z_2) \leq  d_{\square}(Z_1,Z_2) \leq 4\cdot d_\infty(Z_1,Z_2).
\end{equation}
 In the sequel we will use the simpler $d_\infty$ in our proofs rather than $d_{\square}$. 
Note that we have
\begin{multline}\label{equivalence_of_identities}
 d_{\square}(Z_1,Z_2)=0 \; \iff \; d_\infty(Z_1,Z_2)=0 \; \iff \; F_1 \equiv F_2 \; \iff \;
 \\ (X_1,Y_1) \sim  (X_2,Y_2)\; \stackrel{\eqref{uniqueness_of_regular_condexp}}{\iff} \; 
\int_0^1 
\rb_ind [ Z_1(x,\cdot) \equiv Z_2(x,\cdot) ]
 \, {d}x =1.
\end{multline}

Recalling Definition \ref{def:Z_from_perm} it is not hard to see that, for permutations $\sigma_1, \sigma_2 \in S_n$,
 we have $d_{\square}(\sigma_1,\sigma_2)=d_{\square}(Z_{\sigma_1},Z_{\sigma_2})$,
 which allows us to extend the definition of rectangular distance to permutations
 on different sets of integers. Indeed, we may define $d_{\square}(\sigma,\pi):=d_{\square}(Z_{\sigma},Z_{\pi})$ for every pair of permutations $\sigma, \pi \in \mathcal{S}$.

Similarly, we define the rectangular distance of a permutation $\sigma$ and a limit permutation $Z$ by
\begin{equation}\label{rect_dist_between_perm_and_limperm}
 d_{\square}(\sigma,Z):=d_{\square}(Z_{\sigma},Z).
\end{equation}
 With this definition we may express the discrepancy in \eqref{cooper_discrepancy} as
 $D(\sigma)=n \cdot d_{\square}(\sigma, Z_u)$, where $Z_u$ is the uniform limit permutation defined on page \pageref{def_of_Z_u}.

\subsection{Rectangular distance and subpermutations}\label{subsection:rectang_subperm}
The main objective of this subsection is to show that the rectangular distance between a limit permutation and a large constant-size random subpermutation $\sigma(k,Z)$ of it is small.
\begin{lemma}\label{limit_perm_and_large_random_perm_are_close_in_rectangular}
If $k \in \N$ is a sufficiently large integer and $Z\in \mathcal{Z}$, then 
\begin{equation}
 \rbprob{d_{\square} \left( Z,\sigma(k,Z) \right) \leq 16 k^{-1/4}} \geq 1-\frac{1}{2} e^{-\sqrt{k}}.
\end{equation}
 \end{lemma}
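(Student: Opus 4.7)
The plan is to work with the simpler quantity $d_\infty$ from \eqref{def_infty_norm} throughout, converting to $d_\square$ only at the end via \eqref{d_infty_d_square}. Write $\sigma := \sigma(k,Z)$ and let $(X_\ell, Y_\ell)_{\ell=1}^{k}$ be the i.i.d.\ samples from Definition \ref{def:Z_random_perm}; let $X^*_1 < \cdots < X^*_k$ and $Y^*_1 < \cdots < Y^*_k$ be the corresponding order statistics and let $\hat F_k(a,b) := \tfrac{1}{k}|\{\ell : X_\ell \leq a,\; Y_\ell \leq b\}|$ denote the bivariate empirical distribution function of the sample. Unwinding the definitions of $\sigma = S\circ R^{-1}$ and of $Z_\sigma$ via \eqref{density_function_of_perm}, one obtains the exact identity
\[
  F_{Z_\sigma}(i/k,\,j/k) \;=\; \hat F_k(X^*_i,\,Y^*_j), \qquad i,j \in [k],
\]
since the box $B_k(a,b)$ contributes mass $1/k$ to the left-hand side precisely when $a \leq i$ and $\sigma(a) \leq j$, which after the substitution $\ell = R^{-1}(a)$ becomes $X_\ell \leq X^*_i$ and $Y_\ell \leq Y^*_j$.

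Next, setting $t := k^{-1/4}$, I would prove that three good events all occur simultaneously with probability at least $1 - \tfrac{1}{2}e^{-\sqrt k}$: (a) $\sup_{i \in [k]} |X^*_i - i/k| \leq t$; (b) $\sup_{j \in [k]} |Y^*_j - j/k| \leq t$; (c) $\sup_{(a,b)\in[0,1]^2} |\hat F_k(a,b) - F_Z(a,b)| \leq 2t$, where $F_Z$ is the joint distribution function of the $(X,Y)$ associated with $Z$. Events (a) and (b) are instances of the Dvoretzky--Kiefer--Wolfowitz inequality applied separately to each of the uniform marginals $(X_\ell)_{\ell=1}^k$ and $(Y_\ell)_{\ell=1}^k$, each failing with probability at most $2e^{-2\sqrt k}$. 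For (c), apply Hoeffding's inequality for Bernoulli sums at each point of a grid $\{i/m\}_{i=0}^m \times \{j/m\}_{j=0}^m$ with $m = \lceil 2k^{1/4}\rceil$, take a union bound over its $O(\sqrt k)$ points, and extend to all $(a,b)\in[0,1]^2$ by sandwiching, using monotonicity of $\hat F_k$ in each coordinate together with the $1$-Lipschitz estimate \eqref{uniform_marginals_bound} for $F_Z$; the resulting failure probability for (c) is $O(\sqrt k)\,e^{-2\sqrt k}$.

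On the intersection of these three events, fix an arbitrary $(x,y) \in [0,1]^2$ and choose $i,j \in [k]$ with $|x - i/k|, |y - j/k| \leq 1/k$. By the triangle inequality together with the identity above,
\[
  |F_{Z_\sigma}(x,y) - F_Z(x,y)| \;\leq\; |F_{Z_\sigma}(x,y) - F_{Z_\sigma}(i/k, j/k)| + |\hat F_k(X^*_i, Y^*_j) - F_Z(X^*_i, Y^*_j)| + |F_Z(X^*_i, Y^*_j) - F_Z(x,y)|,
\]
and I would bound the three summands by $2/k$ (Lipschitz \eqref{uniform_marginals_bound} for $F_{Z_\sigma}$), by $2t$ (event (c)), and by $|X^*_i - x| + |Y^*_j - y| \leq 2t + 2/k$ (Lipschitz \eqref{uniform_marginals_bound} for $F_Z$ combined with (a) and (b)). Summing gives $d_\infty(Z, Z_\sigma) \leq 4t + O(1/k) \leq 4k^{-1/4}$ for $k$ large, whence $d_\square(Z, \sigma(k,Z)) \leq 4\, d_\infty(Z, Z_\sigma) \leq 16\, k^{-1/4}$ by \eqref{d_infty_d_square}. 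The main obstacle is event (c): obtaining a uniform-over-all-rectangles deviation bound for the 2D empirical $\hat F_k$ with failure probability sub-exponentially decaying in $\sqrt k$ is only possible because both $\hat F_k$ and $F_Z$ are coordinatewise monotone and $F_Z$ is $1$-Lipschitz, which is what reduces the effective number of points at which Hoeffding must be applied from unbounded to~$O(\sqrt k)$.
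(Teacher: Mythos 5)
Your argument is essentially the paper's own proof in a different packaging: you use the same key identity $F_{Z_\sigma}(i/k,j/k)=\hat F_k(X^*_i,Y^*_j)$, the same reduction from all of $[0,1]^2$ to a lattice via the Lipschitz estimate \eqref{uniform_marginals_bound}, and the same Hoeffding-type concentration for the order statistics and for the bivariate empirical counts. The only organizational difference is that you establish three \emph{uniform} good events (DKW for each marginal, plus a uniform empirical-process bound over a sparse grid) and then combine them deterministically, whereas the paper proves a pointwise deviation bound at each of the $k^2$ lattice points (handling the randomness of $X^*_i,Y^*_j$ by conditioning and monotonicity) and union-bounds; both work, since even $k^2e^{-2\sqrt k}\ll e^{-\sqrt k}$, and your uniform event (c) does make the evaluation at the random point $(X^*_i,Y^*_j)$ slightly cleaner.

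One piece of bookkeeping does not close as written: with $t=k^{-1/4}$ your three summands are $2/k$, $2t$, and $2t+2/k$, totalling $4t+4/k$, which is strictly larger than the required $d_\infty\le 4k^{-1/4}=4t$, so the final step ``$\le 4k^{-1/4}$ for $k$ large'' fails by the additive $4/k$. This is trivially repaired --- for instance, take the grid in event (c) with spacing $1/k$ (the union bound over $k^2$ points still costs only $O(k^2)e^{-2\sqrt k}$), so that (c) yields $t+2/k$ instead of $2t$ and the total becomes $3t+6/k\le 4t$ for large $k$, exactly as in the paper's $3k^{-1/4}+4/k\le 4k^{-1/4}$ --- but you should make that adjustment explicit.
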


\begin{proof}
As in \eqref{distribution_fn}, denote by $F$ the joint probability distribution function of the random variable $(X,Y)$ associated with the limit permutation $Z$ (see Definition \ref{def:X_Y_associated_to_Z}). Let $\sigma_k=\sigma(k,Z)$ and, recalling \eqref{density_function_of_perm}, we define the random functions $f_k$ and $F_k$ by
\begin{equation}\label{random_perm_dens_dist_k}
 f_k(x,y)= n \cdot \rb_ind [ \; \sigma_k( \lceil n \cdot x \rceil) =   \lceil n \cdot y \rceil \; ], \quad
F_k(x,y)=\int_0^y \int_0^x f_k(u,v) \, {d} v \; {d} u.
\end{equation}
By \eqref{rect_dist_between_perm_and_limperm}, \eqref{def_infty_norm} and \eqref{d_infty_d_square}, we only need to prove that if $k$ is large enough then 
\begin{equation}\label{dist_of_Z_and_sample_distributionfunction}
 \rbprob{\sup_{x,y \in [0,1]}\rbabs{F(x,y)-F_k(x,y)}  > 4 k^{-1/4}} \leq \frac12 e^{-\sqrt{k}}.
\end{equation}
As in \eqref{epsilon_over_three_trick}, for $i=\lfloor k \cdot x \rfloor$ and $j=\lfloor k \cdot y \rfloor$, we have 
$ \rbabs{F(x,y)-F_k(x,y)} \leq \frac{4}{k} + \rbabs{F\left( \frac{i}{k},\frac{j}{k}\right)-F_k\left( \frac{i}{k},\frac{j}{k}\right)} $.
We claim that, in order to show \eqref{dist_of_Z_and_sample_distributionfunction}, we only need to show that for large $k$ we have
\begin{equation}\label{dist_of_Z_and_sample_lattice}
 \rbprob{\max_{i,j \in [k]} 
\rbabs{F\left( \frac{i}{k},\frac{j}{k}\right)-F_k\left( \frac{i}{k},\frac{j}{k}\right)}
 > 3 k^{-1/4}} < 6e^{-2\sqrt{k}}.
\end{equation}
This is because $4/k<k^{-1/4}$ for large $k$ and
\begin{multline*}
\prob{\max_{i,j \in [k]} 
\abs{F\left( \frac{i}{k},\frac{j}{k}\right)-F_k\left( \frac{i}{k},\frac{j}{k}\right)}
 > 3 k^{-1/4} } = \\
\prob{ \bigcup_{i,j \in [k]} \left\{ \abs{F\left( \frac{i}{k},\frac{j}{k}\right)-F_k\left( \frac{i}{k},\frac{j}{k}\right)}
 > 3 k^{-1/4} \right\} } {\leq} \\
\sum_{i,j \in [k]} \prob{  \abs{F\left( \frac{i}{k},\frac{j}{k}\right)-F_k\left( \frac{i}{k},\frac{j}{k}\right)}
 > 3 k^{-1/4}  } \stackrel{\eqref{dist_of_Z_and_sample_lattice}}{\leq} k^2 \cdot 6 e^{-2\sqrt{k}}=
(12k^2e^{-\sqrt{k}})\cdot \left(\frac12 e^{-\sqrt{k}}\right).
\end{multline*}
Now $12 k^2  e^{-\sqrt{k}}\leq 1$ if $k$ is large enough, and we get \eqref{dist_of_Z_and_sample_distributionfunction}.

To establish~\eqref{dist_of_Z_and_sample_lattice}, let $(X_1,Y_1)$, \dots, $(X_k,Y_k)$ be i.i.d.\,$[0,1]^2$-valued random variables with distribution function $F(x,y)$. Denote by $(X_1^*,\dots X_k^*)$ and  $(Y_1^*,\dots Y_k^*)$ the values of $\{X_1, \dots, X_k\}$ and $\{Y_1, \dots, Y_k\}$, respectively, rearranged in increasing order.

Then we have
\begin{equation*}
 F_k\left( \frac{i}{k},\frac{j}{k}\right) \stackrel{\eqref{random_perm_dens_dist_k}}{=}
\frac{1}{k} \sum_{\ell=1}^i \rb_ind[ \sigma_k(\ell) \leq j] 
= \frac{1}{k} \sum_{\ell=1}^k \rb_ind [ X_\ell \leq X^*_i, Y_\ell \leq Y^*_j].
\end{equation*}
In order to prove \eqref{dist_of_Z_and_sample_lattice} we first show that 
\begin{equation}\label{dist_of_Z_and_sample_indicator_ordered}
 \forall\; i,j \in [k] \, : \; 
\rbprob{ 
\frac{1}{k} \sum_{l=1}^k \rb_ind [ X_l \leq X^*_i, Y_l \leq Y^*_j]
 > F\left( \frac{i}{k},\frac{j}{k}\right) +  3 k^{-1/4}} < 3 e^{-2\sqrt{k}}.
\end{equation}
Set $\varepsilon=k^{-1/4}$. By \eqref{uniform_marginals_bound} we get
\begin{equation}\label{uniform_marginals_epsilon_trick}
F\left( \frac{i}{k}+\varepsilon, \frac{j}{k}+\varepsilon \right) +\varepsilon \leq 
 F\left( \frac{i}{k}, \frac{j}{k} \right) + 3 \varepsilon.
\end{equation}
Let us define the event 
$$A:=\left\{ \frac{1}{k} \sum_{l=1}^k \rb_ind [ X_l \leq X^*_i, Y_l \leq Y^*_j]
 > F\left( \frac{i}{k}+\varepsilon, \frac{j}{k}+\varepsilon \right) +\varepsilon \right\}.$$
Then by 
\eqref{uniform_marginals_epsilon_trick}
 the probability on the left-hand side of
\eqref{dist_of_Z_and_sample_indicator_ordered} is less then or equal to $\rbprob{A}$. On the other hand,
\begin{multline}\label{prob_A_in_three_pieces}
\rbprob{A} =\\ \rbprob{A \cap \{ X^*_i < \frac{i}{k} +\varepsilon \} \cap \{ Y^*_j < \frac{j}{k} +\varepsilon \} }
+ \rbprob{A \cap  \left(\{ X^*_i \geq \frac{i}{k} +\varepsilon\} \cup \{ Y^*_j \geq \frac{j}{k} +\varepsilon \} \right) }\\
 \leq \rbprob{A \cap \{ X^*_i < \frac{i}{k} +\varepsilon\} \cap \{ Y^*_j < \frac{j}{k} +\varepsilon \} } +\rbprob{ X^*_i > \frac{i}{k} +\varepsilon} +\rbprob{ Y^*_j > \frac{j}{k} +\varepsilon}.
\end{multline}
In order to bound these probabilities we are going to use the following large deviation estimate on binomial random variables, which is a consequence of Hoeffding's inequality (see McDiarmid~\cite{mcdiarmid98:_concen}). 
\begin{equation}\label{hoeffding}
S \sim \text{BIN}(k,p) \quad \implies \quad \begin{cases}
                                             \rbprob{\frac{1}{k} S > p+ \varepsilon} \leq \exp(-2k \varepsilon^2) \\
  \rbprob{\frac{1}{k} S < p- \varepsilon} \leq \exp(-2k \varepsilon^2).
                                            \end{cases}
 \end{equation}
This inequality will be used to bound the three terms on the r.h.s.\,of \eqref{prob_A_in_three_pieces}.
\begin{equation*}
 \rbprob{ X^*_i > \frac{i}{k} +\varepsilon}= \rbprob{ \frac{1}{k}\sum_{\ell=1}^k \rb_ind[X_\ell \leq \frac{i}{k} +\varepsilon] < \frac{i}{k}}
\stackrel{\eqref{hoeffding}}{\leq}  \exp(-2k \varepsilon^2)
\end{equation*}
and, similarly, it holds that $\rbprob{ Y^*_j > \frac{j}{k} +\varepsilon} \leq  \exp(-2k \varepsilon^2)$. Thus it only remains to notice that
\begin{multline*}
\rbprob{A \cap \{ X^*_i < \frac{i}{k} +\varepsilon \} \cap \{ Y^*_j < \frac{j}{k} +\varepsilon \} }
\leq \\
 \rbprob{ \frac{1}{k} \sum_{l=1}^k \rb_ind [ X_l \leq \frac{i}{k} +\varepsilon, \; Y_l \leq \frac{j}{k} +\varepsilon]
 > F\left( \frac{i}{k}+\varepsilon, \frac{j}{k}+\varepsilon \right) +\varepsilon} \stackrel{\eqref{hoeffding}}{\leq}  \exp(-2k \varepsilon^2).
\end{multline*}
Thus by \eqref{prob_A_in_three_pieces} and the above applications of \eqref{hoeffding} we get \eqref{dist_of_Z_and_sample_indicator_ordered}, namely
\[ \rbprob{A} \leq 3 \exp(-2k \varepsilon^2)= 3 \exp(-2k (k^{-1/4})^2) =  3 e^{-2\sqrt{k}}. \]
The inequality
\begin{equation*}
 \forall\; i,j \in [k] \, : \; 
\rbprob{ 
\frac{1}{k} \sum_{\ell=1}^k \rb_ind [ X_\ell \leq X^*_i, Y_\ell \leq Y^*_j]
 < F\left( \frac{i}{k},\frac{j}{k}\right) -  3 k^{-1/4}} < 3 e^{-2\sqrt{k}}
\end{equation*}
can be proven analogously. Putting this and \eqref{dist_of_Z_and_sample_indicator_ordered} together we get
\eqref{dist_of_Z_and_sample_lattice}.
\end{proof}

\begin{lemma}\label{cor.amostra2}
Let $k$ be a sufficiently large positive integer. Let $n>e^{2\sqrt{k}}$ and  $\pi \in S_n$. 
Then we have 
\begin{equation}\label{sample_close_to_original_perm}
  \rbprob{ d_{\square}(\pi,\sigma(k,\pi))\leq 16k^{-1/4}} \geq 1- e^{-\sqrt{k}}.
\end{equation}

\end{lemma}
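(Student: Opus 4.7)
The plan is to bootstrap Lemma~\ref{limit_perm_and_large_random_perm_are_close_in_rectangular} via the limit permutation $Z_\pi$ associated to $\pi$ through Definition~\ref{def:Z_from_perm}. By the extension of $d_\square$ to permutations (see \eqref{rect_dist_between_perm_and_limperm} and the paragraph after \eqref{equivalence_of_identities}), we have $d_\square(\pi, Z_\pi)=d_\square(Z_\pi,Z_\pi)=0$, so by the triangle inequality it suffices to bound $d_\square(Z_\pi,\sigma(k,\pi))$.

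The key step is to couple the random permutations $\sigma(k,\pi)$ and $\sigma(k,Z_\pi)$ using the construction already employed in the proof of Lemma~\ref{permutation_and_associated_limit_permutation_subperm_densities_are_close}. Let $(X_1,Y_1),\dots,(X_k,Y_k)$ be i.i.d.\,samples with the distribution associated to $Z_\pi$; by Definition~\ref{def:Z_random_perm} the permutation $S\circ R^{-1}$ obtained from these samples is distributed as $\sigma(k,Z_\pi)$. Define $\mathcal{X}_i=\lceil nX_i\rceil$ and the event $B=\{\mathcal{X}_i\neq\mathcal{X}_j\text{ for all }i\neq j\}$ as in \eqref{def_event_B}. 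On $B$, the sorted tuple $(\mathcal{X}_1^*,\dots,\mathcal{X}_k^*)$ is uniform on $[n]^k_<$, and by \eqref{density_function_of_perm} we have $\lceil nY_i\rceil=\pi(\mathcal{X}_i)$; so the permutation $S\circ R^{-1}$ encodes the relative order of $(\pi(\mathcal{X}_1^*),\dots,\pi(\mathcal{X}_k^*))$, which is precisely $\sigma(k,\pi)$ by Definition~\ref{def:random_subpermutation}. Hence on $B$ the coupled samples satisfy $\sigma(k,\pi)=\sigma(k,Z_\pi)$, and consequently
\begin{equation*}
d_\square(\pi,\sigma(k,\pi))=d_\square(Z_\pi,Z_{\sigma(k,\pi)})=d_\square(Z_\pi,\sigma(k,Z_\pi))\qquad\text{on }B.
\end{equation*}

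Now apply Lemma~\ref{limit_perm_and_large_random_perm_are_close_in_rectangular} with $Z=Z_\pi$: for $k$ sufficiently large, the event $A=\{d_\square(Z_\pi,\sigma(k,Z_\pi))\leq 16k^{-1/4}\}$ holds with probability at least $1-\tfrac12 e^{-\sqrt{k}}$. From the coupling, $A\cap B$ implies $d_\square(\pi,\sigma(k,\pi))\leq 16k^{-1/4}$. A union bound gives failure probability at most $\tfrac12 e^{-\sqrt{k}}+\rbprob{B^c}$, and \eqref{def_event_B} together with the assumption $n>e^{2\sqrt{k}}$ yields
\begin{equation*}
\rbprob{B^c}\leq \binom{k}{2}\frac{1}{n}\leq \frac{k^2}{2}e^{-2\sqrt{k}}\leq \frac{1}{2}e^{-\sqrt{k}}
\end{equation*}
for $k$ large, since $k^2\leq e^{\sqrt{k}}$ eventually. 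Adding the two contributions gives the claimed bound $1-e^{-\sqrt{k}}$.

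I do not anticipate a real obstacle: the argument is essentially a coupling of the $Z_\pi$-random permutation with the random subpermutation of $\pi$, reusing the same event $B$ that appeared in Lemma~\ref{permutation_and_associated_limit_permutation_subperm_densities_are_close}. The only mildly delicate points are verifying that on $B$ the identity $\sigma(k,\pi)=\sigma(k,Z_\pi)$ holds in the coupled construction (immediate from the definitions once one notices $\pi(\mathcal{X}_i)=\lceil nY_i\rceil$), and the elementary estimate $k^2e^{-2\sqrt{k}}\leq e^{-\sqrt{k}}$ for large $k$, which fixes what ``sufficiently large'' means.
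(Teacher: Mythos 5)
Your proof is correct and follows essentially the same route as the paper: both couple $\sigma(k,\pi)$ with $\sigma(k,Z_\pi)$ via the event $B$ from the proof of Lemma~\ref{permutation_and_associated_limit_permutation_subperm_densities_are_close} and then invoke Lemma~\ref{limit_perm_and_large_random_perm_are_close_in_rectangular}, with the same bound $\rbprob{B^c}\leq\binom{k}{2}/n\leq\frac12 e^{-\sqrt{k}}$. The only cosmetic difference is that you use the union bound $\rbcondprob{A}{B}\geq\rbprob{A\cap B}\geq 1-\rbprob{A^c}-\rbprob{B^c}$ where the paper uses $\rbcondprob{A}{B}\geq\rbprob{A}-\rbprob{B^c}$ from \eqref{condition_can_be_dropped_if_large}; these give identical constants.
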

\begin{proof}
The proof uses ideas similar to the ones in the proof of Lemma \ref{permutation_and_associated_limit_permutation_subperm_densities_are_close}.

Given the permutation $\pi$, define $Z_{\pi}$ by Definition \ref{def:Z_from_perm}. The random permutations $\sigma(k,\pi)$ and $\sigma(k,Z_{\pi})$ do not have the same distribution, but, if
 $(X_1,Y_1)$, \dots, $(X_k,Y_k)$ are i.i.d.\,$[0,1]^2$-valued random variables associated with $Z_\pi$ (see Definition \ref{def:X_Y_associated_to_Z}), then 
\begin{itemize}
\item[(i)] the distribution of $\sigma(k,Z_{\pi})$ is the same as that of $S \circ R^{-1}$ (defined for $Z_{\pi}$ in \eqref{def_R_S}),
\item[(ii)]  the distribution of $\sigma(k,\pi)$ is the same as that of $S \circ R^{-1}$ under the condition that the event $B$ 
defined in \eqref{def_event_B} occurs.
\end{itemize}
It follows from \eqref{condition_can_be_dropped_if_large} that for any event $A$ we have
\[  \rbabs{\rbprob{A} -\rbcondprob{A}{B}} \leq \rbprob{B^c} \leq \frac{1}{n} \binom{k}{2} \leq e^{-2\sqrt{k}} \binom{k}{2} 
\stackrel{(*)}{\leq}
 \frac12 e^{-\sqrt{k}},
\]
where the inequality $(*)$ holds if $k$ is large enough.
If we let $A=\{d_{\square} \left( Z_\pi ,S \circ R^{-1} \right) \leq 16 k^{-1/4}\}$ then we have $\rbprob{A} \geq 1-e^{-\sqrt{k}}/2$ by
Lemma \ref{limit_perm_and_large_random_perm_are_close_in_rectangular}. Now, for $k$ large enough, \eqref{sample_close_to_original_perm} follows from
\[
 \rbprob{ d_{\square}(\pi,\sigma(k,\pi))\leq 16k^{-1/4}} =\rbcondprob{A}{B} \geq \rbprob{A}- \rbprob{B^c}\geq
 1- \frac12 e^{-\sqrt{k}} - \frac12 e^{-\sqrt{k}} \geq 1-e^{-\sqrt{k}}.
\]
\end{proof}

\section{Limits of permutation sequences}\label{section:limits_of_perm_seq}

The objective of this section is to use the machinery developed in the previous sections to prove our main results. In Subsection \ref{subsection:equivalence_of_conv_for_Z}
 we define three different types of convergence on the space of limit permutations $\mathcal{Z}$ (weak convergence, $d_{\square}$-convergence and convergence of subpermutation densities) and prove that they are all equivalent. This is then used in Subsection~\ref{subsection:poof_of_main} to prove the theorems stated in the introduction.

\subsection{Equivalence of different notions of convergence of limit permutations}\label{subsection:equivalence_of_conv_for_Z} First we prove that a limit permutation is uniquely determined by the collection of its subpermutation densities (up to the almost sure equivalence in \eqref{equivalence_of_identities}). Recall that we denote by $\mathcal{S}=\bigcup_{i=1}^\infty S_n$  the set of all finite permutations.
\begin{lemma}\label{subperm_dens_uniquely_determines_Z}
 Let $Z, \ykwhat{Z} \in \mathcal{Z}$. Denote by $(X,Y)$ and $(\ykwhat{X},\ykwhat{Y})$ the $[0,1]^2$-valued random variables associated with $Z$ and $\ykwhat{Z}$ (see Definition \ref{def:X_Y_associated_to_Z}). Then we have
 \begin{equation*}
  \; \forall \,  \tau \in \mathcal{S} \, : \, t(\tau, Z)=t(\tau,\ykwhat{Z}) \qquad
\implies \qquad (X,Y) \sim (\ykwhat{X},\ykwhat{Y}).
\end{equation*}
\end{lemma}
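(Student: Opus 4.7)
The plan is to exploit Lemma~\ref{limit_perm_and_large_random_perm_are_close_in_rectangular}, which says that the $Z$-random permutation $\sigma(k,Z)$ is close to $Z$ in the rectangular distance with high probability when $k$ is large. Combined with the equivalence chain in \eqref{equivalence_of_identities}, it suffices to show that the hypothesis $t(\tau,Z)=t(\tau,\ykwhat{Z})$ for all $\tau\in\mathcal{S}$ forces $d_\square(Z,\ykwhat{Z})=0$.

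First, I would observe that by Definition~\ref{def:subperm_density_in_Z} and formula~\eqref{formula_subperm_density_in_Z}, the assumption $t(\tau,Z)=t(\tau,\ykwhat{Z})$ for every $\tau$ means precisely that the random permutations $\sigma(k,Z)$ and $\sigma(k,\ykwhat{Z})$ have the same distribution on $S_k$ for each fixed $k$. Fix a large integer $k$ and define the two subsets of $S_k$
\[
E := \{\sigma \in S_k : d_\square(Z,\sigma) \leq 16 k^{-1/4}\}, \qquad
E' := \{\sigma \in S_k : d_\square(\ykwhat{Z},\sigma) \leq 16 k^{-1/4}\}.
\]
By Lemma~\ref{limit_perm_and_large_random_perm_are_close_in_rectangular}, $\rbprob{\sigma(k,Z)\in E}\geq 1-\tfrac12 e^{-\sqrt k}$ and $\rbprob{\sigma(k,\ykwhat{Z})\in E'}\geq 1-\tfrac12 e^{-\sqrt k}$. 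Since the two random permutations have identical distribution, both inequalities refer to the \emph{same} probability measure on $S_k$, so a union bound gives $\rbprob{\sigma(k,Z) \in E \cap E'} \geq 1 - e^{-\sqrt k} > 0$ for $k$ large enough. In particular, $E\cap E'$ is nonempty.

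Picking any $\sigma^*\in E\cap E'$ and applying the triangle inequality for $d_\square$ (which follows directly from the sup definition~\eqref{rectangular_distance_def_eq} applied to $Z_1=Z$, $Z_2=Z_{\sigma^*}$ and $Z_1=Z_{\sigma^*}$, $Z_2=\ykwhat{Z}$, using the convention $d_\square(\sigma,Z):=d_\square(Z_\sigma,Z)$ from~\eqref{rect_dist_between_perm_and_limperm}) yields
\[
d_\square(Z,\ykwhat{Z}) \leq d_\square(Z,\sigma^*) + d_\square(\sigma^*,\ykwhat{Z}) \leq 32 k^{-1/4}.
\]
Since $k$ is arbitrary, letting $k\to\infty$ forces $d_\square(Z,\ykwhat{Z})=0$, and then the chain of equivalences in~\eqref{equivalence_of_identities} delivers $(X,Y)\sim(\ykwhat{X},\ykwhat{Y})$.

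I do not expect any real obstacle here: the only delicate point is the clean observation that equality of subpermutation densities is literally equality of distributions of the sampled permutations, which makes the union-bound step work on a single probability space rather than requiring an explicit coupling. Everything else is an application of Lemma~\ref{limit_perm_and_large_random_perm_are_close_in_rectangular} and triangle inequality for the pseudometric~$d_\square$.
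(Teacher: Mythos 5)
Your proof is correct, and it takes a genuinely different route from the paper's, although both arguments ultimately rest on the sampling Lemma~\ref{limit_perm_and_large_random_perm_are_close_in_rectangular}. The paper reconstructs the joint distribution function deterministically: it observes that the random empirical distribution function $F_k$ of \eqref{random_perm_dens_dist_k} is a function of $\sigma(k,Z)$ alone, so that $\rbexpect{F_k(x,y)}$ is computable from $\left(t(\tau,Z)\right)_{\tau\in S_k}$, and then uses the concentration estimate \eqref{dist_of_Z_and_sample_distributionfunction} (from inside the proof of that lemma) to conclude $\lim_{k\to\infty}\rbexpect{F_k(x,y)}=F(x,y)$ pointwise; hence $F$ is determined by the densities. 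You instead run a ``common witness'' argument: equality of all subpermutation densities is literally equality in law of $\sigma(k,Z)$ and $\sigma(k,\ykwhat{Z})$, so the two high-probability events $E$ and $E'$ live on a single probability measure on $S_k$, their intersection is nonempty, and one permutation $\sigma^*$ close to both limits plus the triangle inequality for the pseudometric $d_\square$ forces $d_\square(Z,\ykwhat{Z})\le 32k^{-1/4}\to 0$. Your version has the advantage of using only the statement of Lemma~\ref{limit_perm_and_large_random_perm_are_close_in_rectangular} rather than an estimate internal to its proof, and it sidesteps the (mild but real) bookkeeping needed to justify that $\rbexpect{F_k(x,y)}$ is a function of the densities; the paper's version buys the explicit reconstruction formula $F(x,y)=\lim_k\rbexpect{F_k(x,y)}$, which makes the slogan ``$Z$ can be recovered from its samples'' concrete. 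The only cosmetic caveat in your write-up is that the lemma holds only for $k$ sufficiently large, so the limit $k\to\infty$ should be taken along such $k$ — which you in effect already do.
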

\begin{proof}
Denote by $F$  the joint probability
density function of $(X,Y)$  (see \eqref{distribution_fn}).
By \eqref{equivalence_of_identities} it suffices to prove that, 
 if we know $\left( t(\tau, Z)\right)_{\tau \in \mathcal{S}}$, 
 then we also
know the value of $F(x,y)$ for every $x,y \in [0,1]$. 

As a matter of fact, given $\left( t(\tau, Z)\right)_{\tau \in \mathcal{S}}$
we know the distribution of the random 
permutation $\sigma(k,Z)$ for every $k \in \N$
by \eqref{formula_subperm_density_in_Z}, and, from $\sigma(k,Z)$, we can define the random function $F_k$ as in \eqref{random_perm_dens_dist_k}. Thus we can calculate the expected value $\rbexpect{F_k(x,y)}$ 
given $\left( t(\tau, Z)\right)_{\tau \in S_k}$. As in the proof of Lemma \ref{limit_perm_and_large_random_perm_are_close_in_rectangular}, it follows
that \eqref{dist_of_Z_and_sample_distributionfunction} holds, which implies that 
for every $x,y \in [0,1]$ we have $\lim_{k \to \infty} \rbexpect{F_k(x,y)}=F(x,y)$. 

In other words, we have shown that, given $\left( t(\tau, Z)\right)_{\tau \in \mathcal{S}}$, the value of $F(x,y)$ can be recovered for every $x,y \in [0,1]$.
\end{proof}

Recall the definition of $\toinp$ from Subsection \ref{subsection:weak} and
the definition of $d_{\square}(Z_1,Z_2)$ from \eqref{rectangular_distance_def_eq}.
\begin{definition}\label{modes_of_convergence}
Let $Z,Z_1,Z_2,\ldots$ be limit permutations. 
Denote by $(X_n,Y_n)$ and $(X,Y)$ the $[0,1]^2$-valued 
random variables associated with $Z_n$ and $Z$, respectively. We say that
\begin{itemize}
 \item[1.] $Z_n \Rightarrow Z$ if $(X_n,Y_n) \toinq (X,Y)$ holds;
 \item[2.] $Z_n \toinsq Z$ if $ \lim_{n \to \infty} d_{\square}(Z_n,Z) =0$;
\item[3.] $Z_n \toint Z$ if $ \; \forall \,  \tau \in \mathcal{S} \, : \, \lim_{n \to \infty} t(\tau, Z_n)=t(\tau,Z)$.
\end{itemize}
\end{definition}

\begin{lemma}\label{different_modes_of_conv_for_Z_are_equiv}
 Let $Z,Z_1, Z_2, \ldots$ be limit permutations. The three notions of convergence of Definition
\ref{modes_of_convergence} are equivalent, that is we have
\[ Z_n \Rightarrow Z \quad \iff \quad Z_n \toinsq Z \quad \iff \quad Z_n \toint Z. \]
\end{lemma}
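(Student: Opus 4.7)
The plan is to establish the cycle via the chain $2 \iff 1 \implies 3 \implies 1$. Namely, the equivalence $1 \iff 2$ will follow almost immediately from the groundwork laid in Section~\ref{section:prob_prelim}, while the implications $1 \implies 3$ and $3 \implies 1$ will use the Portmanteau theorem and a Prokhorov-type compactness argument together with Lemma~\ref{subperm_dens_uniquely_determines_Z}, respectively.

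For $1 \iff 2$, I would invoke Lemma~\ref{convergence_in_distribution_uniform_convergence_of_F}: since the marginals of each $(X_n,Y_n)$ are $U[0,1]$, weak convergence $(X_n,Y_n)\toinq (X,Y)$ is equivalent to $\|F_n-F\|_\infty\to 0$, which by \eqref{def_infty_norm} is $d_\infty(Z_n,Z)\to 0$, and by the sandwich \eqref{d_infty_d_square} is equivalent to $d_\square(Z_n,Z)\to 0$.

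For $1 \implies 3$, fix $\tau \in S_k$ and consider the $k$-fold product measures $\mu_n^k$ and $\mu^k$ on $([0,1]^2)^k$, where $\mu_n$ and $\mu$ are the probability measures corresponding to $(X_n,Y_n)$ and $(X,Y)$. By \eqref{product_measure_weak_conv}, $\mu_n \Rightarrow \mu$ yields $\mu_n^k \Rightarrow \mu^k$. Let $A_\tau \subseteq ([0,1]^2)^k$ be the set of configurations $((x_1,y_1),\ldots,(x_k,y_k))$ all of whose $x$-coordinates and $y$-coordinates are distinct and whose associated order-permutation equals $\tau$; then by the equivalence of Definitions~\ref{def:Z_random_perm} and~\ref{intro_def_of_Z_random_perm_page} we have $\mu_n^k(A_\tau)=t(\tau,Z_n)$ and $\mu^k(A_\tau)=t(\tau,Z)$. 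The boundary $\partial A_\tau$ is contained in the union of the hyperplanes $\{x_i=x_j\}$ and $\{y_i=y_j\}$ for $i\neq j$; since the marginals of $\mu$ are uniform (hence atomless), each such hyperplane has $\mu^k$-measure zero, so $\mu^k(\partial A_\tau)=0$ and $A_\tau$ is a continuity set of $\mu^k$. The Portmanteau characterization \eqref{portemanteau} then gives $t(\tau,Z_n)\to t(\tau,Z)$.

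For $3 \implies 1$, it suffices to show that every subsequence of $((X_n,Y_n))$ admits a further sub-subsequence converging weakly to $(X,Y)$. Given a subsequence, apply Prokhorov's theorem \eqref{prokhorov} on the compact space $[0,1]^2$ to extract a further weakly convergent sub-subsequence $(X_{n_j},Y_{n_j}) \toinq (X',Y')$. By \eqref{weak_conv_uniform_marginals}, $X',Y'\sim U[0,1]$, so by Lemma~\ref{regular} the pair $(X',Y')$ is associated with some limit permutation $Z'\in \mathcal{Z}$. The implication $1 \implies 3$ already proved gives $t(\tau, Z_{n_j}) \to t(\tau, Z')$ for every $\tau$, while the hypothesis of item~3 gives $t(\tau,Z_{n_j})\to t(\tau,Z)$. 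Hence $t(\tau,Z')=t(\tau,Z)$ for all $\tau \in \mathcal{S}$, and by Lemma~\ref{subperm_dens_uniquely_determines_Z} we conclude $(X',Y')\sim (X,Y)$, closing the subsequence argument.

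The main obstacle is the continuity-set verification in Step $1 \implies 3$: one has to observe explicitly that the $k$-point configurations with coordinate ties form a null set for $\mu^k$, which relies critically on uniformity (and hence continuity) of the marginals. Every other step is essentially bookkeeping that assembles Lemma~\ref{convergence_in_distribution_uniform_convergence_of_F}, \eqref{d_infty_d_square}, \eqref{product_measure_weak_conv}, \eqref{weak_conv_uniform_marginals}, \eqref{prokhorov} and Lemma~\ref{subperm_dens_uniquely_determines_Z}.
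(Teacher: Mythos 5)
Your proposal is correct and follows essentially the same route as the paper: the equivalence $1\iff 2$ via Lemma~\ref{convergence_in_distribution_uniform_convergence_of_F} and \eqref{d_infty_d_square}, the implication $1\implies 3$ via \eqref{product_measure_weak_conv} and the Portmanteau theorem with the boundary of $A_\tau$ shown to be null because the uniform marginals forbid coordinate ties, and the implication $3\implies 1$ via Prokhorov, \eqref{weak_conv_uniform_marginals}, Lemma~\ref{regular} and Lemma~\ref{subperm_dens_uniquely_determines_Z}. The only (immaterial) difference is that you phrase the last step as the standard subsequence principle, whereas the paper runs the identical compactness-plus-uniqueness argument as a contradiction with a fixed bounded continuous test function.
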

\begin{proof} As before, denote by $F_n$ and $F$ the respective 
joint probability distribution functions of $(X_n,Y_n)$ and $(X,Y)$, following \eqref{distribution_fn}.

We first address the equivalence of $Z_n \Rightarrow Z$ and $Z_n \toinsq Z$. On the one hand,
$Z_n \Rightarrow Z$ and $(X_n,Y_n) \toinp (X,Y)$ are equivalent by definition, and it holds that $X_n,Y_n \sim U[0,1]$. On the other hand, by \eqref{def_infty_norm} and \eqref{d_infty_d_square}, we have that $Z_n \toinsq Z$ is equivalent to
$\|F_n-F\|_\infty \to 0$. The equivalence of $Z_n \Rightarrow Z$ and $Z_n \toinsq Z$ is now a direct consequence of Lemma \ref{convergence_in_distribution_uniform_convergence_of_F}.

\medskip

Now we are going to prove that $Z_n \Rightarrow Z$ and $Z_n \toint Z$ are equivalent.

First we assume that $Z_n \Rightarrow Z$ holds. In order to deduce $Z_n \toint Z$ from this, we need to show that
$$ \; \forall \, \tau \in \mathcal{S} \, : \, \lim_{n \to \infty} t(\tau, Z_n)=t(\tau,Z).$$
Fix $k$ and $\tau \in S_k$. Let $(X_n^i,Y_n^i)$, $i=1,\dots,k,$ be i.i.d.\,with the same joint distribution as $(X_n,Y_n)$, while $(X^i,Y^i)$, $i=1,\dots,k,$ are i.i.d.\,with the same joint distribution as $(X,Y)$.

It follows from $Z_n \Rightarrow Z$ and \eqref{product_measure_weak_conv} that  
\begin{equation}\label{hosszu_eloszl_konv}
 \Big( (X^i_{n},Y^i_{n}) \Big)_{i \in [k]} \toinp \Big( (X^i,Y^i) \Big)_{i \in [k]} \qquad \textrm{ as }n \to \infty,
 \end{equation}
i.e., the sequence of $[0,1]^{2k}$-valued random variables $\Big( (X^i_{n},Y^i_{n}) \Big)_{i \in [k]}$, $n=1,2,\dots$, converges in
distribution to the $[0,1]^{2k}$-valued random variable $\Big( (X^i,Y^i) \Big)_{i \in [k]}$.

Let $A_{\tau} \subseteq [0,1]^{2k} $ denote the event that the relative order of the vertical coordinates of $(x^i,y^i)$, $i=1,\dots,k,$ is $\tau$ with respect to their ordered horizontal coordinates. Then
\begin{equation}\label{2k_dimensional_weak_convergence}
 \rbprob{ \Big( (X^i_{n},Y^i_{n}) \Big)_{i \in [k]} \in A_{\tau}}=t(\tau,Z_{n}) \quad \textrm{and}\quad
\rbprob{ \Big( (X^i,Y^i) \Big)_{i \in [k]} \in A_{\tau}}=t(\tau,Z).
 \end{equation}
In order to deduce $t(\tau, Z_n) \to t(\tau,Z)$ from \eqref{2k_dimensional_weak_convergence} we apply \eqref{portemanteau}. We only need to show that
the boundary of $A_{\tau}$ has measure $0$ with respect to the distribution of  $\Big( (X^i,Y^i) \Big)_{i \in [k]}$, which is indeed true because, on the boundary of $A_{\tau}$, either $x^i=x^j$ or $y^i =y^j$ holds for 
some $i \neq j$, but this has zero probability since both $(X^1,\dots,X^k)$ and $(Y^1,\dots,Y^k)$ are i.i.d.\,and $U[0,1]$-distributed. Thus we have proven that $Z_n \Rightarrow Z$ implies $Z_n \toint Z$.

Now we assume that $Z_n \toint Z$ holds and deduce from this that $Z_n \Rightarrow Z$ holds. By \eqref{def_weak_conv_of_prob_meas} we only need to show that for any bounded, continuous function $f:[0,1]^2 \to \RR$ we have
\begin{equation}\label{f_expect_conv}
 \rbexpect{f(X_n,Y_n)} \to \rbexpect{f(X,Y)}.
\end{equation}
We are going to prove this by contradiction: assume that there is some $f$ such that \eqref{f_expect_conv} does not hold.
 Since  $\rbabs{\rbexpect{f(X_n,Y_n)}}\leq \|f\|_\infty$, we can
choose a subsequence $\left( n(m) \right)_{m=1}^\infty$ such that 
 $\lim_{m \to \infty}\rbexpect{f(X_{n(m)},Y_{n(m)})} = a$ for some $a \neq \rbexpect{f(X,Y)}$. Now by \eqref{prokhorov} we can find a second subsequence $\left( m(\ell) \right)_{\ell=1}^\infty$ and a $[0,1]^2$-valued random variable
$(\ykwhat{X},\ykwhat{Y})$ such that if
we define $(\ykwhat{X}_\ell,\ykwhat{Y}_\ell):=(X_{n(m(\ell))},Y_{n(m(\ell))})$ then $(\ykwhat{X}_\ell,\ykwhat{Y}_\ell) \toinp (\ykwhat{X},\ykwhat{Y})$ as $\ell \to \infty$.
In particular, we have $\rbexpect{f(\ykwhat{X},\ykwhat{Y})}=a$.

 We now look at the properties of $\ykwhat{X}$ and $\ykwhat{Y}$. By \eqref{weak_conv_uniform_marginals} we know that $\ykwhat{X},\ykwhat{Y} \sim U[0,1]$. Using Lemma \ref{regular}, we define $\ykwhat{Z}$ to be the limit permutation associated with $(\ykwhat{X},\ykwhat{Y})$. By definition, we now have $\ykwhat{Z}_l \Rightarrow \ykwhat{Z}$
where $\ykwhat{Z}_\ell:=Z_{n(m(\ell))}$, and hence
$\ykwhat{Z}_\ell \toint \ykwhat{Z}$ follows from our previous argument. On the other hand, from the assumption $Z_n \toint Z$, we obtain $\ykwhat{Z}_\ell \toint Z$ .

However, from $\ykwhat{Z}_l \toint Z$, $\ykwhat{Z}_l \toint \ykwhat{Z}$ and Lemma \ref{subperm_dens_uniquely_determines_Z}, we obtain
$(X,Y) \sim (\ykwhat{X},\ykwhat{Y})$ and, in particular, $\rbexpect{f(\ykwhat{X},\ykwhat{Y})}=\rbexpect{f(X,Y)}$ which contradicts $\rbexpect{f(\ykwhat{X},\ykwhat{Y})}=a \neq \rbexpect{f(X,Y)}$. Thus we have proven that $Z_n \Rightarrow Z$ and $Z_n \toint Z$ are equivalent, which completes the proof of the lemma.
\end{proof}

\subsection{Proof of the main theorems}\label{subsection:poof_of_main} 
In this section, we shall focus on the proofs of the main results of this work, which have been stated in the introduction. We start with a brief overview of these results. Theorem~\ref{teorema_principal} establishes the correspondence between convergent permutation sequences and limit permutations, in the sense that there is a limit permutation associated with every convergent sequence and vice-versa. The fact that the limit of a permutation sequence is essentially unique is the subject of Theorem~\ref{res_princ1}, while Theorem~\ref{teorema_equiv} relates the original concept of convergence of a permutation sequence to convergence with respect to the metric $d_{\square}$.

As a consequence of Claim~\ref{claim_eventually_constant}, we shall henceforth
 restrict our attention to permutation sequences $(\sigma_n)_{n \in \mathbb{N}}$ such that $ |\sigma_n| \to \infty$.  Moreover, as a useful auxiliary fact, we point out that it follows from Definitions~\ref{def:sigma_n_to_Z} and~\ref{def:Z_from_perm} and from Lemma \ref{permutation_and_associated_limit_permutation_subperm_densities_are_close} that we have
\begin{equation}\label{convergence_for_discrete_and_smooth_are_the_same}
\sigma_n \to Z  \quad \iff \quad Z_{\sigma_n} \toint Z.
\end{equation}

\begin{proof}[Proof of Theorem \ref{teorema_principal} (i)]
Let us define $Z_n:=Z_{\sigma_n}$.
As usual, we denote by $(X_n,Y_n)$ the $[0,1]^2$-valued 
random variables associated with $Z_n$ (see Definition~\ref{def:X_Y_associated_to_Z}), which satisfy $X_{n},Y_{n} \sim U[0,1]$ for every $n$.
By~\eqref{prokhorov} there is a $[0,1]^2$-valued random variable $(X,Y)$ and a subsequence  $\left( n(m) \right)_{m=1}^\infty$
such that $(X_{n(m)},Y_{n(m)}) \toinp (X,Y)$ as $m \to \infty$.
Moreover, it follows from \eqref{weak_conv_uniform_marginals} that we have $X,Y \sim U[0,1]$. Using Lemma~\ref{regular} we define $Z$ to be the 
limit permutation associated with $(X,Y)$. With these definitions we have $Z_{n(m)} \Rightarrow Z$ as $m \to \infty$, from which
$Z_{n(m)} \toint Z$ follows by Lemma \ref{different_modes_of_conv_for_Z_are_equiv}. 
By \eqref{convergence_for_discrete_and_smooth_are_the_same} we obtain $t(\tau,\sigma_{n(m)}) \to t(\tau,Z)$ as
$m \to \infty$ for every $\tau$.
Since we have assumed that $t(\tau,\sigma_n)$ is convergent (see Definition \ref{def_conv}) as $n \to \infty$
we obtain $t(\tau,\sigma_{n}) \to t(\tau,Z)$ as $n \to \infty$, i.e.,  $\sigma_n \to Z$.
\end{proof}

\begin{proof}[Proof of Theorem \ref{teorema_principal} (ii)]
The basic idea of the proof comes from \cite{diaconis_janson}, where the authors suggest that the analogous theorem for graphs and graphons
can be proven using reverse martingales.

We jointly define the random permutations $\left(\sigma_n\right)_{n \in \N}$ in the following way: let $(X_n,Y_n)$, $n \in \N$ be i.i.d.\,$[0,1]^2$-valued random variables associated with $Z$ (see Definition \ref{def:X_Y_associated_to_Z}).
Let $\sigma_n$ be the relative order of the vertical coordinates of
 $(X_i,Y_i)$, $i=1,\dots,n$  with respect to their ordered horizontal coordinates (see Definition \ref{def:Z_random_perm}).
Note that, with this definition, $\sigma_{n}$ is almost surely a subpermutation of $\sigma_{n+1}$.

We are going to show that $\rbprob{ \sigma_n \to Z}=1$, i.e., that the random sequence of permutations $\left(\sigma_n\right)_{n \in \N}$ almost surely converges to $Z$.
 It is enough to show that, for all $\tau \in \mathcal{S}$,
we have  $\rbprob{ t(\tau,\sigma_n) \to t(\tau,Z)}=1$. Fix $\tau \in S_k$ and
let $A_{\tau} \subseteq [0,1]^{2k} $ denote the event that the relative order of the vertical coordinates of
 $(x^i,y^i)$, $i=1,\dots,k$, is $\tau$ with respect to their ordered horizontal coordinates. 
If $k \leq n$ then by \eqref{def_subperm_density_formula_cases}  the random variable $t(\tau,\sigma_n)$ 
can be expressed as
\[ t(\tau,\sigma_n)=\frac{1}{\binom{n}{k}} \sum_{(i_1,\dots,i_k) \in [n]_{<}^k}
 \rb_ind \left[ \Big( (X_{i_j},Y_{i_j}) \Big)_{j \in [k]} \in A_{\tau} \right]. \]
From this formula and \eqref{2k_dimensional_weak_convergence}, linearity of expectation leads to $\rbexpect{t(\tau,\sigma_n)}=t(\tau,Z)$.

Now  $\rbprob{ t(\tau,\sigma_n) \to t(\tau,Z)}=1$ follows from the strong law of large numbers for U-statistics
(see Section~3.4 of \cite{u_statistics}).
\end{proof}

We observe that there is an alternative, more self-contained proof of $\rbprob{ \sigma_n \to Z}=1$, with $(\sigma_n)$ as in the proof above.  In fact, we may use
the Borel-Cantelli lemma to derive $\rbprob{ d_{\square}(\sigma_n,Z) \to 0 }=1$ from Lemma
 \ref{limit_perm_and_large_random_perm_are_close_in_rectangular} and then apply Lemma \ref{different_modes_of_conv_for_Z_are_equiv}
to derive $\rbprob{ Z_{\sigma_n} \toint Z }=1$ from this and finally use \eqref{convergence_for_discrete_and_smooth_are_the_same}
to arrive at $\rbprob{ \sigma_n \to Z}=1$.

\begin{proof}[Proof of Theorem \ref{res_princ1}]
 We want to show that $\sigma_n \to Z_1$ and $\sigma_n \to Z_2$ implies that the set $\{x : Z_1(x,\cdot) \not \equiv Z_2(x,\cdot)\}$
has Lebesgue measure zero. In light of the equivalence of the statements in~\eqref{equivalence_of_identities}, this follows from Lemma \ref{subperm_dens_uniquely_determines_Z}.
\end{proof}

\begin{proof}[Proof of Theorem \ref{teorema_equiv}]
 By Claim \ref{claim_eventually_constant} we may assume that $\rbabs{\sigma_n} \to \infty$.

Let $Z_n:=Z_{\sigma_n}$. It suffices to show that $\left(\sigma_n\right)_{n \in \N}$
is a Cauchy sequence w.r.t.\ $d_{\square}$ if and only if
$Z_n \toinsq Z$ for some $Z \in \mathcal{Z}$ (i.e., the completion of $(\mathcal{S},d_{\square})$ is 
$(\mathcal{Z}/_{\sim},d_{\square})$, where $\sim$ is the equivalence relation in \eqref{equivalence_of_identities}).
Indeed, assuming that this is true, Lemma \ref{different_modes_of_conv_for_Z_are_equiv} implies that the fact that $\left(\sigma_n\right)_{n \in \N}$
is a Cauchy sequence with respect to $d_{\square}$ is equivalent to
$Z_n \toint Z$ for some limit permutation $Z$, which is equivalent to $\sigma_n \to Z$ by 
\eqref{convergence_for_discrete_and_smooth_are_the_same}, as required.

We now prove the above assertion. It is clear that, if
$Z_n \toinsq Z$ for some $Z \in \mathcal{Z}$, then $\sigma_n$ is a Cauchy sequence w.r.t.\ $d_\square$. For the converse, let $\sigma_n$ be such a Cauchy sequence and denote by $(X_n,Y_n)$ the $[0,1]^2$-valued random variables associated with $Z_n$. Denote by $F_n$ the joint probability distribution function of
$(X_n,Y_n)$. By \eqref{def_infty_norm} and \eqref{d_infty_d_square} we get that $F_n$ is a Cauchy sequence w.r.t.\ the $L_\infty$-norm.
Thus by the completeness of $L_\infty[0,1]^2$, we have $F_n \stackrel{L_\infty}{\to} F$ for some $F:[0,1]^2 \to [0,1]$. 
Now we show that $F$ itself is the joint probability distribution function of  $[0,1]^2$-valued random variable with uniform marginals.

By \eqref{prokhorov} there is an $(X,Y)$ and a 
subsequence  $\left( n(m) \right)_{m=1}^\infty$
such that $(X_{n(m)},Y_{n(m)}) \toinp (X,Y)$ as $m \to \infty$. By \eqref{weak_conv_uniform_marginals} we have $X,Y \sim U[0,1]$.
Denote by $\ykwhat{F}$ the joint probability distribution function of $(X,Y)$.
By Lemma \ref{convergence_in_distribution_uniform_convergence_of_F} we have $F_{n(m)} \stackrel{L_\infty}{\to} \ykwhat{F}$.
Comparing this to $F_n \stackrel{L_\infty}{\to} F$ we get that $F=\ykwhat{F}$. In other words, $(Z_n)$ converges to some $Z \in \mathcal{Z}$ w.r.t.\ the $L_\infty$-norm, and hence $Z_n \toinsq Z$ by \eqref{equivalence_of_identities}, as required.
\end{proof}

\noindent \emph{Acknowledgements:} The authors are indebted to an anonymous referee for valuable comments and suggestions. 

%\def\MR#1{}
%\bibliographystyle{amsplain_yk}
%\bibliography{references}

\providecommand{\bysame}{\leavevmode\hbox to3em{\hrulefill}\thinspace}
\providecommand{\MR}{\relax\ifhmode\unskip\space\fi MR }
% \MRhref is called by the amsart/book/proc definition of \MR.
\providecommand{\MRhref}[2]{%
  \href{http://www.ams.org/mathscinet-getitem?mr=#1}{#2}
}
\providecommand{\href}[2]{#2}
\def\MR#1{\relax}

\appendix

\section{Proof of Lemma~\ref{regular}}
In this appendix we prove Lemma~\ref{regular}. The proof is quite similar to that of Theorem~4 of Chapter~II, Section~7 of~\cite{shity}. It relies on Kolmogorov's notion of conditional expectation, about which we recall
some useful facts now.

Given a measurable space $\left(\Omega,\mathcal{F}\right)$ and an
event $A \in \mathcal{F}$ we denote by ${\rb_ind}_A=\rb_ind[A]$ the
random variable which takes value $1$ if the event $A$ occurs and $0$
if $A$ does not occur.  The following theorem states the existence and
almost sure uniqueness of conditional expectation.  For a proof, see
Section~7 of Chapter~2 of~\cite{shity}.

\begin{theorem}\label{theorem_condexp_exist_unique}
Let $\left(\Omega,\mathcal{F},  \mathbb{P} \right)$ be a probability space and let $\mathcal{G} \subseteq  \mathcal{F}$ be a $\sigma$-algebra. Consider an $\mathcal{F}$-measurable, real-valued random variable $Y$ with $\rbexpect{|Y|}<\infty$.
  \begin{enumerate}[(i)]
  \item There exists a $\mathcal{G}$-measurable, real-valued random
    variable $W$ such that $\rbexpect{|W|}<\infty$ and
    \begin{equation}\label{def_eq_condexp}
      \forall \, A \in \mathcal{G}\, : \; \rbexpect{ Y \cdot
        {\rb_ind}_A} = \rbexpect{ W \cdot {\rb_ind}_A}. 
    \end{equation}
  \item If $\widehat{W}$ is another $\mathcal{G}$-measurable,
    real-valued random variable such that
    $\rbexpect{|\widehat{W}|}<\infty$ and
    \[\forall \, A \in \mathcal{G}\, : \; \rbexpect{ Y \cdot
      {\rb_ind}_A} = \rbexpect{ \widehat{W} \cdot {\rb_ind}_A}\] then we
    have $\rbprob{W=\widehat{W}}=1$.
  \end{enumerate}
\end{theorem}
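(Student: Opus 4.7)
The plan is to deduce both parts from the Radon--Nikodym theorem applied to a signed measure built from $Y$ on the sub-$\sigma$-algebra $\mathcal{G}$. For part (i), I would first reduce to the nonnegative case by writing $Y=Y^+-Y^-$ where $Y^\pm\geq 0$ and $\rbexpect{Y^\pm}<\infty$, and handling each piece separately (then setting $W=W^+-W^-$). So fix $Y\geq 0$ with $\rbexpect{Y}<\infty$ and define a set function on $\mathcal{G}$ by
\[
  \nu(A)\ =\ \rbexpect{Y\cdot\rb_ind_A},\qquad A\in\mathcal{G}.
\]
Using monotone convergence it is routine to check that $\nu$ is a finite (nonnegative) measure on $(\Omega,\mathcal{G})$ and that $\nu\ll \mathbb{P}|_{\mathcal{G}}$, since $\mathbb{P}(A)=0$ forces $Y\cdot\rb_ind_A=0$ almost surely and hence $\nu(A)=0$. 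The Radon--Nikodym theorem then yields a $\mathcal{G}$-measurable function $W\geq 0$ with $\rbexpect{W}=\nu(\Omega)<\infty$ satisfying $\nu(A)=\rbexpect{W\cdot\rb_ind_A}$ for every $A\in\mathcal{G}$, which is exactly \eqref{def_eq_condexp}.

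For part (ii), suppose $W$ and $\widehat W$ are both $\mathcal{G}$-measurable, integrable, and satisfy \eqref{def_eq_condexp}. Consider the two $\mathcal{G}$-measurable events
\[
  A_+\ =\ \{W>\widehat W\},\qquad A_-\ =\ \{W<\widehat W\}.
\]
Subtracting the identities \eqref{def_eq_condexp} for $W$ and $\widehat W$ against the indicator of $A_+$ gives $\rbexpect{(W-\widehat W)\cdot\rb_ind_{A_+}}=0$. Since the integrand is nonnegative, it must vanish almost surely, forcing $\mathbb{P}(A_+)=0$. The symmetric argument shows $\mathbb{P}(A_-)=0$, so $\mathbb{P}(W=\widehat W)=1$, as claimed.

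The only nontrivial ingredient is the Radon--Nikodym theorem, which is standard (and the only real ``obstacle,'' though it is a black box). The remaining checks---that $\nu$ is a finite measure, that $\nu\ll\mathbb{P}|_{\mathcal{G}}$, and the sign argument for uniqueness---are routine measure-theoretic manipulations. Since the statement is quoted verbatim from Shiryaev's textbook, I would simply cite \cite{shity} for the full details and include this brief sketch for self-containedness, in the same spirit as the rest of the appendix.
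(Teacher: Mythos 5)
Your proposal is correct, and it matches the paper's treatment: the paper does not prove this theorem but cites Section~7 of Chapter~2 of Shiryaev \cite{shity}, whose proof is precisely the Radon--Nikodym argument you sketch (decomposition $Y=Y^+-Y^-$, the finite measure $\nu(A)=\rbexpect{Y\cdot\rb_ind_A}$ on $\mathcal{G}$, absolute continuity, and the sign argument on $\{W>\widehat W\}$, $\{W<\widehat W\}$ for uniqueness). Both your sketch and your suggestion to defer details to \cite{shity} are consistent with how the paper handles this statement.
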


We call any $W$ that satisfies
Theorem~\ref{theorem_condexp_exist_unique}(\textit{i}) the
\textit{conditional expectation of~$Y$ with respect to~$\mathcal{G}$}
and let $\rbcondexpect{Y}{\mathcal{G}}=W$.  Note that
Theorem~\ref{theorem_condexp_exist_unique}(\textit{ii}) states that
the conditional expectation of $Y$ w.r.t.\ $\mathcal{G}$ is almost
surely uniquely defined.

If $X$ is another real-valued $\mathcal{F}$-measurable random variable
and $\sigma(X)$ is the $\sigma$-algebra generated by $X$ then we
let $\rbcondexpect{Y}{X}=\rbcondexpect{Y}{\sigma(X)}$. Since
$\rbcondexpect{Y}{X}$ is a $\sigma(X)$-measurable random variable, it
follows from Theorem~3 of Section~4 of Chapter~2 of~\cite{shity} that
there is a Borel-measurable function $\varphi: \RR \to \RR$ such that
\begin{equation}\label{measurable_sigma_X_varphi}
\rbcondexpect{Y}{X}=\varphi(X).
\end{equation}
For $A \in \mathcal{F}$, let
$\rbcondprob{A}{X}:=\rbcondexpect{\rb_ind_A}{X}$.

The proofs of the following facts about conditional expectation can be
found in Section~7 of Chapter~2 of~\cite{shity}:
\begin{equation}\label{condexp_monotone}
  Y_1 \leq Y_2 \quad \implies \quad \rbprob{  \rbcondexpect{Y_1}{\mathcal{G}} \leq \rbcondexpect{Y_2}{\mathcal{G}}}=1
\end{equation}
\begin{equation}\label{condexp_conti}
  Y_n \leq Y^*, \;  \rbexpect{Y^*}<\infty, \; Y_n \searrow Y \;\; \implies \;\;
  \rbprob{  \rbcondexpect{Y_n}{\mathcal{G}} \searrow \rbcondexpect{Y}{\mathcal{G}}}=1,
\end{equation}
where the notation $Z_n \searrow Z$ indicates that $(Z_n)$ is nonincreasing and converges to $Z$. Recall that we write~$\mathcal{B}[0,1]$ for the $\sigma$-algebra of
Borel sets of the unit interval $[0,1]$.  Denote by $\lambda$ the
Lebesgue measure on $[0,1]$.  We are now ready to give the proof of
Lemma~\ref{regular}.

\begin{proof}[Proof of Lemma \ref{regular}]
  Let $\left(\Omega,\mathcal{F}, \mathbb{P} \right)$ be the
  probability space on which our $[0,1]^2$-valued random variable
  $(X,Y)$ lives.  Recall that we assume~$X$, $Y\sim U[0,1]$ and,
  therefore, for any $B \in \mathcal{B}[0,1]$, we have $\rbprob{X \in
    B}=\rbprob{Y \in B}=\lambda(B)$.  Denote by $\tilde{\QQ}=\QQ \cap
  [0,1]$ the set of rational numbers in $[0,1]$. Thus,
  $\tilde{\QQ}$~is a countable, dense subset of $[0,1]$.  Throughout
  the proof, $\tilde{y}$, $\tilde{y}_1$ and $\tilde{y}_2$ denote
  elements of $\tilde{\QQ}$, while $x$ and $y$ denote elements of
  $[0,1]$.

  For $x \in [0,1]$ and $\tilde{y} \in \tilde{\QQ}$ define
  $\tilde{Z}(x,\tilde{y})$ to be the function for which
  \begin{equation}\label{def_eq_tilde_Z}
    \tilde{Z}(X,\tilde{y})
    \stackrel{\eqref{measurable_sigma_X_varphi}}{:=}
    \rbcondexpect{\rb_ind[Y \leq \tilde{y}]}{X}
    =\rbcondprob{Y \leq \tilde{y}}{X}. 
  \end{equation}
  Note that, given $\tilde{y}$, the function
  $\tilde{Z}(\cdot,\tilde{y})$ is only defined almost surely
  uniquely with respect to the distribution of~$X$ (i.e., the Lebesgue
  measure~$\lambda$).  We may assume $\tilde{Z}(x,1)=1$.

  For $\tilde{y} \in \tilde{\QQ}$ and $\tilde{y}_1 \leq \tilde{y}_2
  \in \tilde{\QQ}$, define $A_{\tilde{y}_1,\tilde{y}_2} \in
  \mathcal{B}[0,1] $ and $ C_{\tilde{y}} \in \mathcal{B}[0,1]$ by
  \begin{equation}
    A_{\tilde{y}_1,\tilde{y}_2}
    =\left\{ x \, : \,
      \tilde{Z}(x,\tilde{y}_1) \leq \tilde{Z}(x,\tilde{y}_2) \right\},
    \quad 
    C_{\tilde{y}}
    = \left\{ x\, : \,
      \lim_{n \to \infty} \tilde{Z}\left(x, \tilde{y}+\frac{1}{n}\right)
      = \tilde{Z}(x,\tilde{y}) \right\}.
  \end{equation}
  It follows from $\rb_ind[Y \leq \tilde{y}_1] \leq \rb_ind[Y \leq
  \tilde{y}_2]$ and~\eqref{condexp_monotone} that
  $\rbprob{\tilde{Z}(X,\tilde{y}_1) \leq \tilde{Z}(X,\tilde{y}_2)}=1$,
  which implies $\lambda( A_{\tilde{y}_1,\tilde{y}_2})=1$.  Moreover,
  it follows from $\rb_ind[Y \leq \tilde{y}+\frac{1}{n}] \searrow
  \rb_ind[Y \leq \tilde{y}]$ and~\eqref{condexp_conti} that
  $\rbprob{ \tilde{Z}(X, \tilde{y}+\frac{1}{n}) \searrow
    \tilde{Z}(X,\tilde{y})}=1$, which implies $\lambda(
  C_{\tilde{y}})=1$.

  Define $A \in \mathcal{B}[0,1]$ by
  \[ A:= \left( \bigcap_{\tilde{y}_1< \tilde{y}_2 \in \tilde{\QQ}}
    A_{\tilde{y}_1,\tilde{y}_2} \right) \cap \left( \bigcap_{\tilde{y}
      \in \tilde{\QQ}} C_{\tilde{y}} \right) .
  \]
  If $x \in A$ then $\tilde{Z}(x,\cdot)|_{\tilde{\QQ}}$ is a monotone
  increasing, right continuous function.  By the countable additivity
  of the measure $\lambda$ we have $\lambda(A)=1$.
  
  Now we construct $Z:[0,1]^2 \to [0,1]$ by defining $Z(x,1)=1$ for
  all~$x$ and, for $y \in [0,1)$, letting
  \begin{equation}\label{def_represent_Z}
    Z(x,y):= \begin{cases}
      \inf_{ y < \tilde{y}} \tilde{Z}(x, \tilde{y}) & \text{ if } x \in A \\
      y & \text{ if } x \notin A.
    \end{cases}
  \end{equation}
  Now we show that this $Z$ is a limit permutation (see Definition
  \ref{def.perm.lim}) and that it satisfies Lemma~\ref{regular}(\textit{a}). 
  Note first that it follows from~\eqref{def_eq_tilde_Z}
  and~\eqref{def_represent_Z} that~$Z$ is measurable.

It follows directly from~\eqref{def_represent_Z} that, for all $x
  \in [0,1]$, the function $Z(x,\cdot)$ is a cdf (see
  \eqref{def_cdf}), and thus $Z$ satisfies
  Definition~\ref{def.perm.lim}(\textit{a}).  For all $x \in A$ and
  $\tilde{y} \in \tilde{\QQ}$ we have
  $Z(x,\tilde{y})=\tilde{Z}(x,\tilde{y})$.

  Next we show that for any $B \in \mathcal{B}[0,1]$ and $y \in [0,1]$
  we have \eqref{reg_cond_borel_formula}:
  \begin{eqnarray*}
    \int_0^1 Z(x,y) &\rb_ind[ x \in B]& \, {d} x
    \stackrel{\lambda(A)=1}{=} \int_0^1 Z(x,y) \rb_ind[ x \in B \cap
    A] \, {d} x\\
    &\stackrel{\eqref{def_represent_Z}}{=}& 
    \int_0^1 \inf_{ y < \tilde{y}} \tilde{Z}(x, \tilde{y}) \rb_ind[ x
    \in B \cap A] \, {d} x\stackrel{(*)}{=}
    \inf_{ y < \tilde{y}} \int_0^1  \tilde{Z}(x, \tilde{y}) \rb_ind[ x \in B \cap A]   \, {d} x\\
    &=&\inf_{ y < \tilde{y}} \rbexpect{ \tilde{Z}(X, \tilde{y}) \rb_ind[
      X \in B \cap A]} \stackrel{\lambda(A)=1}{=}
    \inf_{ y < \tilde{y}} \rbexpect{ \tilde{Z}(X, \tilde{y}) \rb_ind[
      X \in B]}\\
    &\stackrel{\eqref{def_eq_tilde_Z},\eqref{def_eq_condexp} }{=}& 
    \inf_{ y < \tilde{y}} \rbexpect{ \rb_ind[Y \leq \tilde{y} ] \cdot
      \rb_ind[ X \in B] } \stackrel{(*)}{=} \rbexpect{ \rb_ind[ X \in
      B, Y \leq y ]}\\
    &=& \rbprob{X \in B,\, Y \leq y}.
  \end{eqnarray*}
  The equalities marked by $(*)$ hold because of the monotone
  convergence theorem. Our assumption $Y \sim U[0,1]$ and an
  application of~\eqref{reg_cond_borel_formula} with $B=[0,1]$ implies
  that Definition~\ref{def.perm.lim}(\textit{b}) holds, i.e., $Z$ is
  indeed a limit permutation.  This finishes the proof of Lemma
  \ref{regular}$(a)$.

  \medskip

  It only remains to prove Lemma~\ref{regular}(\textit{b}).  Assume
  that $\widehat{Z}$ is another limit permutation that
  satisfies~\eqref{reg_cond_borel_formula}.  For any given $\tilde{y}
  \in \tilde{\QQ}$, define
  \[D_{\tilde{y}}= \left\{ x \,: \,
    Z(x,\tilde{y})=\widehat{Z}(x,\tilde{y}) \right\}.
  \] 
  From the above proof it follows that both $Z(X,\tilde{y})$ and
  $\widehat{Z}(X, \tilde{y})$ satisfy the definition of
  $\rbcondexpect{\rb_ind[Y \leq \tilde{y}]}{X}$.  Thus by
  Theorem~\ref{theorem_condexp_exist_unique}(\textit{ii}) we obtain
  $\rbprob{Z(X,\tilde{y})=\widehat{Z}(X,\tilde{y}) }=1$, which implies
  $\lambda(D_{\tilde{y}})=1$. If we define $D=\bigcap_{\tilde{y}}
  D_{\tilde{y}}$ then we have $\lambda(D)=1$ by countable
  additivity. By Definition~\ref{def.perm.lim}(\textit{a}) both $Z(x,\cdot)$
  and $\widehat{Z}(x,\cdot)$ are right continuous functions for any
  $x$, and if $x \in D$ then these two functions agree on
  $\tilde{\QQ}$ and, therefore, they agree on the whole unit interval.
  This proves that $\lambda( \{ x \, : \, Z(x,\cdot) \equiv \widehat
  {Z}(x,\cdot) \})=1$, which is a reformulation
  of~\eqref{uniqueness_of_regular_condexp}, completing the proof of
  Lemma~\ref{regular}(\textit{b}).

\end{proof}

\end{document}